\newcommand{\Z}{\mathbb{Z}}
\newcommand{\R}{\mathbb{R}}
\newcommand{\C}{\mathbb{C}}
\newcommand{\Q}{\mathbb{Q}}
\newcommand{\SL}{{\text {\rm SL}}}
\newcommand{\im}{\textnormal{Im}}
\newcommand{\sgn}{\operatorname{sgn}}
\def\H{\mathbb{H}}
\newcommand{\leg}[2]{\genfrac{(}{)}{}{}{#1}{#2}}
\newtheorem{theorem}{Theorem}[section]
\newtheorem{lemma}[theorem]{Lemma}
\newtheorem{corollary}[theorem]{Corollary}
\newtheorem{proposition}[theorem]{Proposition}
\newtheorem*{theorem*}{Theorem}
\newtheorem{remark}[theorem]{Remark}
\numberwithin{equation}{section}
\title[Generalized odd rank moments and odd Durfee symbols]
{Automorphic properties of generating functions for generalized odd rank moments and odd Durfee symbols}
\date{\today}
\author{Claudia Alfes} 
\author{Kathrin Bringmann} 
\author{Jeremy Lovejoy}
\address{Fachbereich Mathematik, Technische Universit\"at Darmstadt, Schlossgartenstrasse 7,
64289 Darmstadt, Germany}
\address{Mathematical Institute, University of Cologne, Weyertal 86-90, 50931 Cologne, Germany}
\address{CNRS, LIAFA, Universit\'e Denis Diderot,
Case 7014, 75205 Paris Cedex 13, FRANCE}
\email{alfes@mathematik.tu-darmstadt.de}
\email{kbringma@math.uni-koeln.de}
\email{lovejoy@liafa.jussieu.fr}
\thanks{The second author was partially supported by NSF grant DMS-0757907 and by the Alfried Krupp prize.}
\begin{document}
\begin{abstract}
%We define two-parameter generalizations of two combinatorial constructions of Andrews: the $(k+1)$-marked odd Durfee symbols %and the $2k$th symmetrized moments of odd ranks.  When $k=0$, the corresponding generating functions are in many instances %modular forms and mock theta functions.  When $k \geq 1$ we obtain quasimodular forms and quasimock theta functions.
%This paper is a companion to a recent paper by the authors and R. Osburn \cite{Br-Lo-Os1}.  There we took as a starting point two combinatorial %constructions in the first half of a fundamental paper of Andrews \cite{An1}, namely the $(k+1)$-marked Durfee symbol and the $2k$th %symmetrized rank moment, and generalized them to overpartition pairs.  When $k \geq 1$, the generating functions for these generalized rank %moments and Durfee symbols provided new examples of quasimock theta functions.
We define two-parameter generalizations of Andrews' $(k+1)$-marked odd Durfee symbols and $2k$th symmetrized odd rank moments, and study the automorphic properties of some of their generating functions.  When $k=0$ we obtain families of modular forms and mock modular forms.  When $k \geq 1$, we find quasimodular forms and quasimock modular forms.
\end{abstract}

\maketitle

\section{Introduction}
An effective method for discovering $q$-series with interesting number-theoretic behavior is to generalize the combinatorics of partitions.  Perhaps the most striking example of this is work of the second two authors relating the rank of an overpartition to the Hurwitz class numbers \cite{Br-Lo2}.  Another example is work of Osburn and the second two authors, where extensions of Andrews' $(k+1)$-marked Durfee symbols and $2k$th symmetrized rank moments to overpartition pairs led to many new quasimock modular forms \cite{Br-Lo-Os1}.

Quasimock modular forms combine the properties of classical quasimodular forms and mock modular forms, which themselves generalize Ramanujan's mock theta functions.  Ramanujan's mock theta functions are $q$-hypergeometric series like
\begin{equation*}
f(q):= \sum_{n\ge 0} \frac{q^{n^2}}{(1+q)^2\cdots (1+q^n)^2}
\end{equation*}
whose behavior is closely related to that of modular forms.  To be more precise, Zwegers \cite{Zw} has ``completed" the mock theta functions to obtain so-called harmonic weak Maass forms, which are certain non-holomorphic modular forms (see Section \ref{MockSection} for the definition).
For this he required additional (classical) modular forms which are related to each of the mock theta functions and which we call, following Zagier, the \textit{shadow} of the mock theta function (again see Section \ref{MockSection} for the precise definition).  All mock theta functions turn out to be holomorphic parts of harmonic Maass forms, and their shadows are all unary weight $\frac32$ theta functions.
A \textit{mock modular form} is then more generally the holomorphic part of any harmonic weak Maass form of weight $k$, and the associated shadow is then a modular form of weight $2-k$.
Recall that a \textit{quasi modular form} may be defined as a meromorphic functions on the upper half-plane
that can be written as a linear combination of derivatives of modular forms.
In analogy, a \textit{quasimock modular form} is a linear combination of derivatives of mock modular forms. Some of the applications of these constructions will be mentioned in Section \ref{ConclusionSection}.

In the present paper we consider generalizations of Andrews' $(k+1)$-marked \emph{odd} Durfee symbols and the $2k$th symmetrized \emph{odd} rank moments.  In Section 2 we describe these generalized combinatorial objects and derive their generating functions, which turn out to be the series
\begin{equation} \label{fund}
N^o(a,b;z;q) :=
\sum_{n \geq 0} \frac{\left(-q/a,-q/b;q^2\right)_n(ab)^nq^{2n+1}}
{\left(zq,q/z;q^2\right)_{n+1}}
\end{equation}
or some of its derivatives,
\begin{equation} \label{fund2}
\mathcal{N}_{2k}^o(a,b;q) := \frac{1}{(2k)!}\left(\frac{d^{2k}}{dz^{2k}}z^kN^o(a,b;z;q)\right) \bigg |_{z=1}.
\end{equation}
Here we have employed the standard basic hypergeometric series notation,
$$
\left(a_1,a_2,\dots,a_j;q \right)_n :=
\frac{\left(a_1,a_2,\dots,a_j;q\right)_{\infty}}{\left(a_1q^n,a_2q^n,\dots,a_jq^n;q\right)_{\infty}},
$$
where
$$
(a_1,a_2,\dots,a_j;q)_{\infty} :=
\prod_{k=0}^{\infty}\left(1-a_1q^k\right)\left(1-a_2q^k\right)\cdots\left(1-a_jq^k\right),
$$
and as is typical we drop the ``$;q$" unless the base is something other than $q$.

Then we study the automorphic properties of some special cases of the generating functions $N^o(a,b;z;q)$ and $\mathcal{N}_{2k}^o(a,b;q)$, beginning in Section 3 with the case $b=1/a$, where $q$-series identities can be used to show that the function $N^o(a,1/a;z;q)$ is essentially an infinite product.
\begin{theorem} \label{intro.5}
We have
\begin{equation} \label{intro.5eq}
 N^o(a,1/a;z;q)
 +  \frac{1}{(z+a)\left(1+1/az\right)}
 = \frac{\left(-aq,-q/a;q^2\right)_{\infty}}{\left(z+a\right)\left(1+1/az\right)\left(zq,q/z;q^2\right)_{\infty}}.
\end{equation}
\end{theorem}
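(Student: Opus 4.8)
The plan is to exploit the collapse that occurs at $b=1/a$ and then recognize the resulting series as a telescoping sum. Setting $b=1/a$ in \eqref{fund} gives $-q/b=-aq$ and $ab=1$, so that
\[
N^o(a,1/a;z;q) = \sum_{n\ge 0} \frac{\left(-aq,-q/a;q^2\right)_n q^{2n+1}}{\left(zq,q/z;q^2\right)_{n+1}}.
\]
Writing $D:=(z+a)(1+1/az)$ and expanding, one sees that $D = z + 1/z + a + 1/a$. Multiplying \eqref{intro.5eq} through by $D$, the target reduces to the clean statement
\[
D \sum_{n\ge 0} \frac{\left(-aq,-q/a;q^2\right)_n q^{2n+1}}{\left(zq,q/z;q^2\right)_{n+1}} = \frac{\left(-aq,-q/a;q^2\right)_\infty}{\left(zq,q/z;q^2\right)_\infty} - 1,
\]
which I would establish by telescoping.

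To set up the telescoping, define
\[
f(n) := \frac{\left(-aq,-q/a;q^2\right)_n}{\left(zq,q/z;q^2\right)_n},
\]
so that $f(0)=1$ and, for $|q|<1$ with $z$ avoiding the poles, $f(n)\to (-aq,-q/a;q^2)_\infty / (zq,q/z;q^2)_\infty$. The heart of the matter is the elementary identity, with $w=q^{2n+1}$,
\[
(1+aw)(1+w/a) - (1-zw)(1-w/z) = w\left(a+1/a+z+1/z\right) = wD,
\]
which follows by expanding both products, the $1$ and $w^2$ terms cancelling. Combining this with the relation $(zq,q/z;q^2)_{n+1} = (zq,q/z;q^2)_n(1-zq^{2n+1})(1-q^{2n+1}/z)$ gives
\[
f(n+1) - f(n) = D\,\frac{\left(-aq,-q/a;q^2\right)_n q^{2n+1}}{\left(zq,q/z;q^2\right)_{n+1}},
\]
that is, $f(n+1)-f(n)$ is exactly $D$ times the $n$-th summand of the series.

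Summing over $n\ge 0$ then collapses the left-hand side to $f(\infty)-f(0)$, yielding $D\cdot N^o(a,1/a;z;q) = f(\infty)-1$, which is the claimed identity after dividing by $D$ and transferring the term $1/D$ to the left. I expect the only genuine obstacle to be spotting the correct auxiliary function $f(n)$: the special structure at $b=1/a$, namely that $ab=1$ while the two upper arguments become precisely $-aq$ and $-q/a$, is exactly what forces the difference above to telescope with the single clean factor $wD$. Once $f$ is identified the remaining computation is routine, and convergence is immediate for $|q|<1$ with $z$ bounded away from the points $q^{\pm(2n+1)}$.
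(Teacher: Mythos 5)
Your proof is correct, and it takes a genuinely different route from the paper. The paper first applies a $_3\phi_2$ transformation (Gasper--Rahman, Eq.\ (III.9)) with $(a,b,c,d,e,q) = (q^2,-q/a,-aq,q^3/z,zq^3,q^2)$ to rewrite $1+(z+a)(1+1/az)N^o(a,1/a;z;q)$ as a new ${}_2\phi_1$-type series, and then evaluates that series by the $q$-Gauss summation; your argument bypasses both pieces of basic hypergeometric machinery entirely. The key observation you use --- that with $w=q^{2n+1}$ one has $(1+aw)(1+w/a)-(1-zw)(1-w/z)=wD$ with $D=z+1/z+a+1/a=(z+a)(1+1/az)$, so that
\[
f(n+1)-f(n) \;=\; D\,\frac{\left(-aq,-q/a;q^2\right)_n q^{2n+1}}{\left(zq,q/z;q^2\right)_{n+1}},
\qquad
f(n):=\frac{\left(-aq,-q/a;q^2\right)_n}{\left(zq,q/z;q^2\right)_n},
\]
is exactly right (I verified the expansion and the limit $f(n)\to(-aq,-q/a;q^2)_\infty/(zq,q/z;q^2)_\infty$), and summing telescopes directly to the stated identity. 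What your approach buys is a short, self-contained, elementary proof that makes transparent \emph{why} the specialization $b=1/a$ is special: the product $(ab)^n=1$ and the matched pair of numerator factors are precisely what make the difference telescope with the single factor $wD$. What the paper's approach buys is placement of the identity inside the standard transformation theory --- the same $_3\phi_2$ and $q$-Gauss tools recur throughout the paper (e.g.\ in the Watson--Whipple manipulations behind Theorem \ref{gfsymmoment}), so their proof exhibits Theorem \ref{intro.5} as one instance of a uniform method rather than an ad hoc collapse.
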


\noindent Standard facts about Jacobi forms then imply the following two corollaries.

\begin{corollary} \label{introcor1}
If $z$ and $a$ are of the form $\zeta q^c$ for $c \in \mathbb{Q}$, $\zeta$ a root of unity, and $z \not \in \{-1/a,-a\}$, then $$\frac{1}{(z+a)(1+1/az)}+ N^o\left(a,1/a;z;q\right)$$ is a modular form.
\end{corollary}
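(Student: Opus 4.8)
The plan is to combine Theorem~\ref{intro.5} with the Jacobi triple product and then specialize the elliptic variables to torsion points. First I would use \eqref{intro.5eq} to replace the quantity in question by the product on its right-hand side, so that it suffices to show that
\[
P(z,a;q) := \frac{\left(-aq,-q/a;q^2\right)_{\infty}}{(z+a)\left(1+1/az\right)\left(zq,q/z;q^2\right)_{\infty}}
\]
is a modular form under the stated specialization. Writing $z=e^{2\pi i u}$, $a=e^{2\pi i w}$ and choosing the nome so that $q=e^{\pi i \tau}$, the Jacobi triple product gives
\[
\left(zq,q/z;q^2\right)_{\infty}=\frac{1}{(q^2;q^2)_{\infty}}\sum_{n\in\Z}(-1)^n z^n q^{n^2},\qquad \left(-aq,-q/a;q^2\right)_{\infty}=\frac{1}{(q^2;q^2)_{\infty}}\sum_{n\in\Z}a^n q^{n^2},
\]
so that, up to the common factor $(q^2;q^2)_{\infty}^{-1}$, each infinite product is a classical weight $\tfrac12$ Jacobi theta function in the elliptic variable $u$, respectively $w$. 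Thus $P$ is a ratio of theta functions multiplied by the elementary factor $1/\big((z+a)(1+1/az)\big)$.

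Next I would specialize. The hypotheses $z=\zeta_1 q^{c_1}$ and $a=\zeta_2 q^{c_2}$ with $c_i\in\Q$ and $\zeta_i$ roots of unity mean exactly that the elliptic variables are sent to the torsion points $u=\tfrac12 c_1\tau+\beta_1$ and $w=\tfrac12 c_2\tau+\beta_2$, where $e^{2\pi i\beta_i}=\zeta_i$. The standard fact here is that a Jacobi theta function, evaluated at such a torsion point, is---after multiplication by a suitable rational power of $q$ coming from the index---a modular form of weight $\tfrac12$ on a congruence subgroup, with a multiplier determined by $c_i$, $\beta_i$ and the index; this is precisely the principle that theta functions with rational characteristics are modular. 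Applying this to the numerator and denominator theta functions shows that their quotient is modular of weight $0$. The role of the assumption $z\notin\{-a,-1/a\}$ is to ensure that the specialization does not land on the vanishing locus of the elementary factor $(z+a)\left(1+1/az\right)$, so that $P$ is finite and the identity of Theorem~\ref{intro.5} may legitimately be evaluated there.

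The main obstacle is the elementary factor $(z+a)\left(1+1/az\right)$: unlike the two theta functions it is not by itself a Jacobi form, so one cannot simply quote the torsion-specialization principle for it. The crux of the argument is therefore to check that, once $z$ and $a$ are of the special form $\zeta q^c$, the reciprocal of this factor is compatible with the modular transformation behaviour of the theta quotient---that is, that it produces only a change of weight, level and multiplier rather than destroying modularity---and then to pin down the resulting level and multiplier explicitly. By contrast, the passage from $N^o$ to the theta quotient and the torsion specialization of the theta functions themselves are routine once Theorem~\ref{intro.5} and the Jacobi triple product are in hand.
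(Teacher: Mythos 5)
Your reduction via Theorem \ref{intro.5}, the triple-product identification of $\left(zq,q/z;q^2\right)_\infty$ and $\left(-aq,-q/a;q^2\right)_\infty$ with weight $\frac12$ theta functions, and the torsion-point specialization are exactly the paper's route, and those steps are fine. The gap is that you stop at what you yourself call the crux: you assert that one must ``check'' that the reciprocal of $(z+a)(1+1/az)$ is compatible with the modularity of the theta quotient, but you give no such check --- and in the stated generality none exists. Since $(z+a)(1+1/az)=z+z^{-1}+a+a^{-1}$ vanishes exactly on $u\equiv\pm v+\frac12\pmod{\Z}$, a divisor with no $\tau$-translates, this factor is not elliptic, so no regrouping exhibits the right-hand side of \eqref{intro.5eq} as a quotient of Jacobi forms; and after specializing $z=\zeta_1q^{c_1}$, $a=\zeta_2q^{c_2}$ it becomes a Laurent polynomial in a fractional power of $q$, which in general destroys modularity rather than merely shifting weight, level, or multiplier. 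Concretely, for $(z,a)=(1,q)$ the function of the Corollary equals
\[
\frac{2q\left(-q^2;q^2\right)_\infty^2}{(1+q)^2\left(q;q^2\right)_\infty^2}
=\frac{2q^{\frac34}}{(1+q)^2}\cdot\frac{\eta^2(4\tau)}{\eta^2(\tau)},
\]
and $q^{\beta}(1+q)^{-2}$ is not a weakly holomorphic modular form of any weight on any congruence subgroup with any finite-order multiplier: writing $\tau=it$ one computes $\log\left(q^{\beta}(1+q)^{-2}\right)=-2\log 2+c_1t-\pi^2t^2+\cdots$ as $t\to 0^+$, with a nonvanishing $t^2$-term, whereas the Fourier expansion at the cusp $0$ of any such form forces $\log f(it)=C-k\log t+C'/t+O\left(e^{-c/t}\right)$. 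So the verification you defer would actually fail for general $c\in\Q$ (and the paper's ``up to a power of $q$'' caveat does not rescue it); the specialization is harmless precisely when $(z+a)(1+1/az)$ specializes to a constant times a power of $q$ --- for instance when $z$ and $a$ are genuine roots of unity ($c=0$), in which case the hypothesis $z\notin\{-a,-1/a\}$ makes it a nonzero constant and your argument closes.

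For what it is worth, the paper's own two-line proof has the same lacuna: it declares the right-hand side of \eqref{intro.5eq} to be ``the quotient of two Jacobi forms'' and specializes at torsion points, thereby silently discarding the factor $(z+a)(1+1/az)$. Your write-up is more candid in isolating that factor as the obstacle, but isolating the obstacle is not overcoming it: as a proof of Corollary \ref{introcor1} in the generality claimed, your argument (like the paper's) is incomplete, and the clean repair is to add the hypothesis that the specialized factor be a monomial in $q$.
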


\begin{corollary} \label{introcor2}
If $a = \zeta q^c \neq -1$ for $c \in \mathbb{Q}$ and $\zeta$ a root of unity, then $\mathcal{N}_{2k}(a,1/a;q)$ is a quasimodular form.
\end{corollary}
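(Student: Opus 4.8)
The plan is to apply the moment operator $\mathcal{L} := \frac{1}{(2k)!}\frac{d^{2k}}{dz^{2k}}\bigl(z^k\,\cdot\,\bigr)\big|_{z=1}$ from \eqref{fund2} directly to the identity of Theorem \ref{intro.5}. Writing $R(z) := \frac{1}{(z+a)(1+1/az)} = \frac{z}{(z+a)(z+1/a)}$ for the rational correction and $F(z) := N^o(a,1/a;z;q) + R(z)$ for the infinite product on the right-hand side of \eqref{intro.5eq}, I would record the exact relation
\begin{equation*}
\mathcal{L}\bigl[N^o(a,1/a;z;q)\bigr] = \mathcal{L}[F] - \mathcal{L}[R].
\end{equation*}
The whole strategy rests on showing that $\mathcal{L}[F]$, the symmetrized moment of the \emph{completed} (Jacobi) generating function, is quasimodular; the term $\mathcal{L}[R]$ is elementary and merely records the non-modular rational discrepancy between $N^o$ and $F$ (for $z=a=1$ this is literally a rational function of $a=\zeta q^c$, so it cannot be dropped when $c\neq 0$).

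First I would recognize $F$ as a meromorphic Jacobi form. Up to the factor $(q^2;q^2)_\infty$, both $(zq,q/z;q^2)_\infty$ and $(-aq,-q/a;q^2)_\infty$ are Jacobi theta functions in the elliptic variables attached to $z$ and to $a$, so the right-hand side of \eqref{intro.5eq} is a quotient of theta functions; this is exactly the structural fact underlying Corollary \ref{introcor1}. In the variable $z$ its only poles are at $z=-a$ and $z=-1/a$, and the hypothesis $a=\zeta q^c\neq -1$ guarantees $-a\neq 1\neq -1/a$, so $z=1$ is a regular torsion point of $F$.

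Next I would invoke the standard fact that the $z$-derivatives of a Jacobi form, evaluated at a regular torsion point, are quasimodular. Concretely, differentiating the Jacobi transformation law in $z$ and specializing to $z=1$ produces, beside the modular leading term, lower-order terms carrying the factor $c/(c\tau+d)$ characteristic of the quasimodular Eisenstein series $E_2$; these come precisely from the $z$-derivatives of the exponential index factor $\exp\!\bigl(2\pi i m c z^2/(c\tau+d)\bigr)$ at the torsion point. Since $\mathcal{L}[F]$ is a finite linear combination (organized by Leibniz's rule through the twist $z^k$) of the quantities $\frac{d^{j}}{dz^{j}}F\big|_{z=1}$, it is quasimodular; the twist $z^k$ extracts the $z\leftrightarrow 1/z$-symmetric combination, matching the $2k$th \emph{symmetrized} moment. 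Hence the moment attached to $F$ is a quasimodular form, which is the assertion of the corollary.

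I expect the main obstacle to be the bookkeeping at the level of the Jacobi transformation law: one must pin down the exact weight and the index $m$ of $F$ from its theta-quotient description, since $m$ governs both the precise shape of the $E_2$-corrections produced by differentiation and the congruence subgroup involved. A genuine secondary subtlety is that $F$ is only meromorphic, so one must justify differentiating the transformation law in $z$ at $z=1$ in the presence of the poles at $z=-a,-1/a$; this is legitimate because those poles avoid the torsion point, but it is exactly the place where the holomorphic Jacobi-form theory has to be adapted to the meromorphic setting, and where the correction $\mathcal{L}[R]$ must be tracked so that the moment named in the statement is the quasimodular one.
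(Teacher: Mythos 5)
Your proposal follows essentially the same route as the paper: both rest on Theorem \ref{intro.5} plus the two standard Jacobi-form facts (specializing the elliptic variable at a torsion point gives a modular form; differentiating in the elliptic variable and then setting it to zero gives quasimodular forms), and both then convert the symmetrized moment operator of \eqref{fund2} into such derivatives. The only organizational difference is the conversion step: the paper routes it through Remark \ref{bigremark}, writing $\mathcal{N}^o_{2k}$ as a linear combination of the ordinary moments $\mathcal{H}^o_{2j}=\delta_z^{2j}N^o\big|_{z=1}$, where $\delta_z=z\frac{d}{dz}=\frac{1}{2\pi i}\frac{\partial}{\partial u}$ is literally the derivative appearing in the Jacobi-form fact, whereas you expand $\frac{d^{2k}}{dz^{2k}}\bigl(z^k\,\cdot\,\bigr)\big|_{z=1}$ by Leibniz into plain $z$-derivatives; these are equivalent by elementary linear algebra, so this is not a substantive difference. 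Your sketch of \emph{why} the differentiation fact holds (the $E_2$-type terms coming from differentiating the index factor in the transformation law) is simply the content of the ``standard fact'' the paper cites to \cite{Ei-Za1}.

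The one point needing comment is the rational correction. You correctly insist that $\mathcal{L}\bigl[N^o\bigr]=\mathcal{L}[F]-\mathcal{L}[R]$ and that $\mathcal{L}[R]$ cannot be dropped, but your closing claim that quasimodularity of $\mathcal{L}[F]$ ``is the assertion of the corollary'' is not literally what Corollary \ref{introcor2} says: the corollary concerns $\mathcal{N}_{2k}(a,1/a;q)=\mathcal{L}\bigl[N^o\bigr]$, not the moment of the completed function $F$. Be aware, however, that the paper's own four-line proof has exactly the same gap -- it identifies the moments of $N^o$ (via Remark \ref{bigremark}) with derivatives of the completed product and never mentions the terms $\delta_z^{2j}R\big|_{z=1}$, consistent with its blanket remark that all modularity assertions are made only ``up to'' such normalizations. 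If you want to close the gap rather than flag it, note that each correction $\delta_z^{2j}R\big|_{z=1}$ is a rational function of $a=\zeta q^c$ whose only poles are at $a=-1$ (excluded by hypothesis), of the shape $a/(1+a)^2$ and its powers and derivatives; such specializations are themselves quasimodular by the very same torsion-point principle, since $\frac{a}{(1\pm a)^2}$ at $a=\zeta q^c$ differs from the weight-two specialization $\wp(c\tau+d;\tau)$ of the Weierstrass function (equivalently, from a weight-two Eisenstein series on a congruence subgroup) only by a constant. So your argument, completed by this observation, proves the corollary; as written it is at the same level of rigor as the paper's proof, with the loose end more honestly advertised.
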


%Theorem \ref{intro.5} follows from $q$-series identities and Corollaries \ref{introcor1} and \ref{introcor2} follow from well-known facts about %Jacobi forms.

\begin{remark}
\emph{We  point out that the assertions about the modularity of functions in this paper are in general ``up to multiplication by a power of $q$" and this will only be made precise for the mock modular forms in Theorem \ref{intro1} (see Theorem \ref{explicit}).  Moreover, a substitution of the form $q\mapsto q^M$ is typically required so that the modularity is with respect to some congruence subgroup $\Gamma_1(N)$.  We shall not determine any of these subgroups explicitly.}
\end{remark}

In Section $4$ we look at mock modular forms arising from $N^o(a,b;z;q)$, of which
there are already a number of important examples.  For instance,
$q^{-1}N^o(0,0;1;q)$ is Watson's third order mock theta function $\omega(q)$ \cite{Wa1},
\begin{equation*}
\omega(q) = q^{-1}N^o(0,0;1;q) = \sum_{n \geq 0} \frac{q^{2n^2+2n}}{(q;q^2)_{n+1}^2},
\end{equation*}
$N^o(0,1;1;q)$ and $q^{-1}N^o(0,1/q;1;q)$ are McIntosh's second order mock theta functions
$A(q)$ and $B(q)$ \cite{Mc1} (which were also studied in \cite{Br-On-Rh1}),
\begin{equation*}
A(q) = N^o(0,1;1;q) = \sum_{n \geq 0} \frac{q^{(n+1)^2}(-q;q^2)_n}{(q;q^2)_{n+1}^2},
\end{equation*}
\begin{equation*}
B(q) = q^{-1}N^o(0,1/q;1;q) = \sum_{n \geq 0} \frac{q^{n^2+n}(-q^2;q^2)_n}{(q;q^2)_{n+1}^2},
\end{equation*}
$q^{-1}N^o(1,1/q;1;q)$ is the Hikami-Ramanujan mock theta function $h_1(q)$ \cite[Eq. (12), corrected]{Hi1},\cite[p.3, $\phi(q)$]{Ra1},
\begin{equation*}
h_1(q) = q^{-1}N^o(1,1/q;1;q) = \sum_{n \geq 0} \frac{q^n(-q)_{2n}}{(q;q^2)_{n+1}^2},
\end{equation*}
$N^o(0,1;i;q)$ is the eighth order mock theta function $U_1(q)$ of Gordon and McIntosh \cite{Go-Mc1,Mc1},
\begin{equation*}
U_1(q) = N^o(0,1;i;q) = \sum_{n \geq 0} \frac{q^{(n+1)^2}(-q;q^2)_n}{(-q^2;q^4)_{n+1}},
\end{equation*}
and $N^o(1,1/q;i;q)$ is the mock theta function $\lambda(q)$ studied by both Andrews \cite{An.5} and McIntosh \cite{Mc1}
(which is also equal to the eighth order mock theta function $V_1(q)$ \cite{Go-Mc1}),
\begin{equation*}
\lambda(q) = N^o(1,1/q;i;q) = \sum_{n \geq 0} \frac{q^{n+1}(-q)_{2n}}{(-q^2;q^4)_{n+1}}.
\end{equation*}

%\noindent Here of course the derivatives $\mathcal{N}_{2k}(a,1/a;q)$ will be quasimodular forms.

Combining $q$-series identities with work of Zwegers \cite{Zw1}, we shall see that there are many more mock modular forms among the functions $N^o(a,b;z;q)$ than just the ones above.
\begin{theorem} \label{intro1}
Let $(a,b) \in \{(0,0), (0,1/q), (0,-1), (1,-1)$, $(1,1/q)$\} and let $z$ be any root of unity.  If $(a,b,z) \not \in \{(1,-1,\pm 1), (0,-1,1), (1,1/q,-1)\}$, then the series $N^o(a,b;z;q)$ is a mock theta function.  If $(a,b,z) \in \{(1,-1,\pm 1), (0,-1,1), (1,1/q,-1)\}$, then the series $N^o(a,b;z;q)$ is a weight $3/2$ mock modular form.
\end{theorem}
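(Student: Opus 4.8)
The plan is to reduce each admissible $N^o(a,b;z;q)$ to a combination of Appell--Lerch sums and theta quotients, and then to import the (mock) modularity from Zwegers' analysis of such sums \cite{Zw1}. Recall the Appell--Lerch sum
\begin{equation*}
m(x,q,z) := \frac{1}{j(z;q)}\sum_{r\in\Z}\frac{(-1)^r q^{\binom{r}{2}}z^r}{1-q^{r-1}xz},\qquad j(z;q):=\left(z,q/z,q;q\right)_{\infty},
\end{equation*}
which Zwegers completes to a harmonic weak Maass form of weight $1/2$ with an explicit weight $3/2$ unary theta shadow. Once $N^o(a,b;z;q)$ is written through such sums with $x$ and $z$ specialized to roots of unity times powers of $q$, the dichotomy in the statement will follow by reading off the weight and the shadow.

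First I would simplify the numerator product for each admissible pair. When $a=0$ (resp.\ $b=0$) one uses $\lim_{a\to0}(-q/a;q^2)_n\,a^n = q^{n^2}$, so the offending factor collapses to a power of $q$; when $b=-1$ one has $(-q/b;q^2)_n = (q;q^2)_n$; when $b=1/q$ one has $(-q/b;q^2)_n = (-q^2;q^2)_n$; and for $(a,b)=(1,-1)$ or $(1,1/q)$ the two symbols combine into a single product, namely $(q^2;q^4)_n$ or $(-q)_{2n}$ respectively. After these reductions, $N^o(a,b;z;q)$ becomes, up to a power of $q$, a one-parameter (in $z$) Hecke-type series belonging to the same families as the classical examples $\omega,\,B,\,h_1,\,\lambda$ recorded above.

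Next I would convert each Hecke-type series into Appell--Lerch form. The standard device is to feed the sum into a limiting case of Bailey's lemma (equivalently, to use an appropriate Bailey pair) and then resolve the resulting double sum by partial fractions in the inner summation variable, which collapses it to a single $m(x,q,z)$ together with theta quotients; crucially the dependence on $z$ is carried through transparently. Specializing $z$ to a root of unity with $z\notin\{-a,-1/a\}$, so that no denominator in \eqref{fund} vanishes, places $x$ and $z$ in the range covered by Zwegers' transformation formulas, and one concludes that the generic $N^o(a,b;z;q)$ is the holomorphic part of a weight $1/2$ harmonic weak Maass form whose shadow is a unary weight $3/2$ theta function --- i.e.\ a mock theta function.

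The main obstacle is the behavior at the three exceptional triples $(1,-1,\pm1)$, $(0,-1,1)$, and $(1,1/q,-1)$. For these the Appell--Lerch representation produced above degenerates: the chosen parameters force the holomorphic Appell--Lerch piece to a point where it is no longer a clean single sum, and the correct object is instead a derivative, in the elliptic variable, of an Appell--Lerch sum --- equivalently a higher-level Appell function. Differentiation in the elliptic variable raises the weight by one, so the completion now involves a derivative of Zwegers' $R$-function and the shadow drops to weight $1/2$; this is precisely what exhibits $N^o$ at these triples as a genuine weight $3/2$ mock modular form rather than a mock theta function. Carrying out this limiting reduction rigorously --- tracking the boundary and derivative terms, and confirming that the weight-$1/2$ mock piece really does degenerate for exactly these parameter choices and no others --- is the delicate heart of the argument, and I expect it to demand the most care.
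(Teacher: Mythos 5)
Your proposal follows essentially the same route as the paper: reduce each $N^o(a,b;z;q)$ by $q$-series manipulations (the paper uses the Watson--Whipple transformation \eqref{Watsonid} plus partial fractions and vanishing identities, equivalent in substance to your Bailey-pair device) to Zwegers' Appell--Lerch $\mu$-function plus theta quotients, invoke his completion to obtain weight $1/2$ mock theta functions with unary weight $3/2$ shadows for generic roots of unity $z$, and treat the three exceptional triples by differentiating the $\mu$-representation in the elliptic variable at $z=\pm 1$, which raises the weight to $3/2$ and yields weight $1/2$ theta-function shadows. This is precisely the structure of the paper's proof of Theorem \ref{explicit}, the explicit form of Theorem \ref{intro1}.
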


In Section $5$ we take up the exceptional triples from Theorem \ref{intro1} and show that in each case there is a connection with class numbers of binary quadratic forms.  Let $H(n)$ denote the Hurwitz class number and $F(n)$ denote the Kronecker class number.
\begin{theorem} \label{intro1.5}
We have
\begin{eqnarray}
N^o(1,-1;1;q) &=& -N^o(1,-1;-1;-q) = \sum_{n \geq 1} 2F(n)q^n, \label{classeq1} \\
N^o(0,-1;1;q) &=& \sum_{n \geq 1} H(8n-1)q^n, \label{classeq2} \\
N^o(1,1/q;-1;q) &=& -\sum_{n \geq 1} F(4n-1)(-q)^n = 3\sum_{n \geq 1} H(8n-5)q^{2n-1} - \sum_{n \geq 1} H(8n-1)q^{2n}. \label{classeq3}
\end{eqnarray}
\end{theorem}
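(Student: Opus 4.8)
The first equality $N^o(1,-1;1;q)=-N^o(1,-1;-1;-q)$ in \eqref{classeq1} is elementary, so I would dispose of it first by a direct symmetry of the defining series \eqref{fund}. Setting $a=1$, $b=-1$ and replacing $q\mapsto -q$, $z\mapsto -1$ leaves the base $q^2$ unchanged, sends $(-q/a,-q/b;q^2)_n$ to $(q,-q;q^2)_n=(-q,q;q^2)_n$, sends $(zq,q/z;q^2)_{n+1}$ to $(q,q;q^2)_{n+1}$, and turns $(ab)^nq^{2n+1}$ into $-(ab)^nq^{2n+1}$; collecting the overall sign gives $N^o(1,-1;-1;-q)=-N^o(1,-1;1;q)$. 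So the real content is the three identifications of $N^o$ with class-number series.

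For these I would proceed uniformly. First I would simplify each special value to a single tractable $q$-hypergeometric sum; for instance the limit $a\to 0$ collapses $(-q/a;q^2)_n(ab)^n$ (with $b=-1$) to $(-1)^nq^{n^2}$, so that
\begin{equation*}
N^o(0,-1;1;q)=\sum_{n\geq 0}\frac{(-1)^nq^{(n+1)^2}}{(1-q^{2n+1})(q;q^2)_{n+1}},
\end{equation*}
and the cases $(1,-1)$ and $(1,1/q)$ reduce similarly. The next step is to rewrite each such sum, using the Bailey-pair and Appell--Lerch machinery of Sections 3--4, as an Appell--Lerch sum (equivalently a specialization of Zwegers' $\mu$-function \cite{Zw1}). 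By Theorem \ref{intro1} we already know the outcome is a weight $3/2$ mock modular form; the point now is to make this explicit by completing it to a harmonic weak Maass form $\widehat{N}^o$ and reading off its shadow, which in each case is a unary weight $1/2$ theta function supported on the relevant arithmetic progression.

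On the arithmetic side I would invoke Zagier's nonholomorphic weight $3/2$ Eisenstein series $\mathcal{H}$, whose holomorphic part is $\sum_{n\ge 0}H(n)q^n$ and whose shadow is a multiple of $\sum_{m\in\Z}q^{m^2}$. Using the Hurwitz--Kronecker relation expressing $F$ in terms of $H$, the generating functions $\sum 2F(n)q^n$ and $\sum F(4n-1)(-q)^n$ can likewise be presented as weight $3/2$ mock modular forms; restricting $\mathcal{H}$ to the residue classes $8n-1$ and $8n-5$ (after the substitution $q\mapsto q^M$ flagged in the Remark) produces mock modular forms on a congruence subgroup with explicitly computable shadows, and the factor $3$ together with the split into $q^{2n-1}$ and $q^{2n}$ parts in \eqref{classeq3} is exactly what this residue-class bookkeeping predicts.

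It then remains to match the two sides. Having arranged that, for each identity, the completed $q$-series form and the completed class-number form are harmonic weak Maass forms of weight $3/2$ on the same group with \emph{equal} shadows, their difference is a \emph{holomorphic} modular form of weight $3/2$; I would finish by comparing Fourier coefficients up to the Sturm bound, or by a dimension count forcing the relevant space to be trivial, to conclude that the difference vanishes. I expect the main obstacle to be twofold: carrying out the $q$-series-to-Appell--Lerch reduction cleanly enough that the shadow can be identified unambiguously, and, on the arithmetic side, getting the normalization of the Kronecker class numbers and the mod-$8$ progressions to line up so that the shadows genuinely coincide rather than merely agree up to an unaccounted theta term.
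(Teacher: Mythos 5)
Your proposal is sound in outline, but it takes a genuinely different route from the paper. The paper's proof is far more classical: starting from the bilateral Lambert series representation \eqref{Watsonid}, it identifies each specialization directly with known class-number generating functions, using Kronecker's identity (XI) for \eqref{classeq1}, an identity from Ramanujan's lost notebook (proved by Andrews) together with Humbert's formula for \eqref{classeq2}, and, for \eqref{classeq3}, the $\mu$-symmetry of Lemma \ref{muProperties}(2) used purely as a $q$-series transformation (differentiated at $z=1$) followed by Watson's identity (3.04); the harmonic-Maass machinery of Section 4 is never invoked. Your plan instead completes both sides to weight $\frac32$ harmonic weak Maass forms, matches shadows against sieved versions of Zagier's Eisenstein series, and finishes by a holomorphy-plus-Sturm argument; this is precisely the method of \cite{Br-Lo2}, and it is viable here since Theorem \ref{explicit} already supplies the shadows ($\Theta(\tau)$, $\eta^2(2\tau)/\eta(\tau)$, $\eta^2(4\tau)/\eta(2\tau)$) on the $q$-series side. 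Two technical points you should not elide: the difference of two harmonic Maass forms with equal shadow is a priori only \emph{weakly} holomorphic, so you must verify moderate growth at \emph{all} cusps before any coefficient comparison; and the space of holomorphic weight $\frac32$ forms on the relevant congruence subgroup is generally nontrivial (it contains unary theta series), so a dimension count will not suffice and you genuinely need the Sturm-bound check. What each approach buys: the paper's is short and elementary given the nineteenth/twentieth-century literature it leans on, while yours is more systematic, requires no luck in locating classical identities, and would generalize to specializations for which no such identities exist. Your verification of the elementary symmetry $N^o(1,-1;1;q)=-N^o(1,-1;-1;-q)$ is correct and matches the remark made in the paper's Section 4.
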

\noindent These follow from $q$-series identities together with work of Andrews \cite{An.7}, Humbert \cite{Hu1}, Kronecker \cite{Kr1}, and Watson \cite{Wa2}.

Finally in Section $6$ we prove the following Theorem, which depends on certain partial differential equations involving $N^o(a,b;z;q)$.
\begin{theorem} \label{intro2}
For $k \geq 1$ and $(a,b) = (0,0), (0,1/q), (0,-1), (1,-1)$, or $(1,1/q)$, the series $\mathcal{N}_{2k}^{o}(a,b;q)$ is a quasimock modular form.
\end{theorem}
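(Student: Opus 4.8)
The plan is to realize $\mathcal{N}_{2k}^{o}(a,b;q)$ as (up to elementary factors) a symmetric Taylor coefficient of the two-variable series $N^o(a,b;z;q)$ at $z=1$, and then to use a partial differential equation to show that passing to these coefficients converts the elliptic ($z$-)derivatives into $q$-derivatives of the base case $N^o(a,b;1;q)$. First I would record the symmetry $N^o(a,b;z;q) = N^o(a,b;1/z;q)$, which is immediate from the invariance of the denominator $(zq,q/z;q^2)_{n+1}$ in \eqref{fund} under $z\mapsto 1/z$. Writing $z=e^{2\pi i u}$, this makes $N^o$ even in $u$, so the operator $\frac{1}{(2k)!}\frac{d^{2k}}{dz^{2k}}z^{k}(\cdot)\big|_{z=1}$ in \eqref{fund2} extracts a normalization of the $2k$th symmetric Taylor coefficient of $N^o$ about $z=1$; in particular the base case is $\mathcal{N}_0^{o}(a,b;q)=N^o(a,b;1;q)$, which by Theorem \ref{intro1} (note that $z=1$ is a root of unity) is a mock modular form for each admissible pair $(a,b)$. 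Conceptually, $N^o(a,b;z;q)$ is a mock Jacobi form and the theorem is the mock analogue of the Eichler–Zagier fact that the Taylor coefficients of a Jacobi form about a torsion point are quasimodular.

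The technical heart, and the main obstacle, is the second step: producing a partial differential equation linking the two relevant derivatives. Writing $\delta_z=z\frac{d}{dz}$ and $D=q\frac{d}{dq}$, the goal is a relation of the schematic shape
\[
\delta_z^2\, N^o(a,b;z;q) = \alpha(q)\, D\, N^o(a,b;z;q) + \beta(z;q)\, N^o(a,b;z;q) + \Phi(a,b;z;q),
\]
where $\alpha$ is (quasi)modular, $\beta$ is an explicit elementary function, and $\Phi$ collects the extra terms produced when $\delta_z^2$ hits the denominator. I would obtain it by differentiating the summand of \eqref{fund} in $z$ and in $q$ and matching the two after partial-fraction and telescoping manipulations of $(zq,q/z;q^2)_{n+1}$, specializing $(a,b)$ to the five admissible pairs so that the coefficients collapse. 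The delicate point is that $\delta_z$ applied to the denominator generates sums such as $\sum_{j}q^{2j+1}/(1-zq^{2j+1})$ whose values at $z=1$ are new Eisenstein/Appell–Lerch-type series rather than derivatives of $N^o$; the content of the PDE is precisely that, for these special $(a,b)$, such contributions organize into quasimodular coefficient functions and into the shadow-carrying pieces already present in Theorem \ref{intro1}, so that $\Phi$ introduces no genuinely new mock object.

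Granting the PDE, the third step is a short induction on $k$. Iterating the PDE and evaluating the elliptic derivatives at $z=1$ expresses the $2k$th coefficient through lower ones and through $D$ applied to the $(2k-2)$th; unwinding gives a formula of the form
\[
\mathcal{N}_{2k}^{o}(a,b;q) = \sum_{j=0}^{k} c_{k,j}(q)\, D^{\,j}\!\big(N^o(a,b;1;q)\big) + (\text{$D$-derivatives of finitely many auxiliary mock and modular forms from }\Phi),
\]
with each $c_{k,j}(q)$ quasimodular. Since the base case $N^o(a,b;1;q)$ is mock modular, every summand is a quasimodular form times a $q$-derivative of a mock modular form; as the space of quasimock modular forms is closed under multiplication by quasimodular forms and under $D$, it follows that $\mathcal{N}_{2k}^{o}(a,b;q)$ is quasimock modular. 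The only extra care concerns the pairs $(0,-1)$ and $(1,-1)$, where the base case is a weight $3/2$ mock modular form rather than a mock theta function; one must check that its $D$-derivatives and the accompanying correction terms stay within the quasimock modular class, which holds because the construction never leaves the module generated over the quasimodular forms by $D$-derivatives of mock modular forms.
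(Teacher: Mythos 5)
Your overall strategy coincides with the paper's in outline: identify $\mathcal{N}_{2k}^o(a,b;q)$ with (constant multiples of) even elliptic derivatives of $N^o(a,b;z;q)$ at $z=1$ (the paper does this via Remark \ref{bigremark}), find a PDE trading elliptic derivatives for $\tau$-derivatives, and induct starting from the $z=1$ mock modularity supplied by Theorem \ref{intro1}. The genuine gap is your second step. You assert that a PDE of the shape $\delta_z^2 N^o = \alpha(q)\,D N^o + \beta\, N^o + \Phi$ can be produced by ``differentiating the summand \dots and matching after partial-fraction and telescoping manipulations,'' and that the leftover term $\Phi$ ``organizes into quasimodular coefficient functions and shadow-carrying pieces.'' Neither claim is substantiated, and this is exactly where all the difficulty of the theorem lives. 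The paper does not derive these PDEs by elementary series manipulation; it imports them from \cite{BZ}, where they are obtained by applying the heat operator to \emph{non-holomorphic completions} of Jacobi-type forms. For instance, for $(a,b)=(0,0)$ one has
\begin{equation*}
\left(6\pi i \frac{\partial}{\partial \tau} + \frac{\partial^2}{\partial u^2}\right)\left(q^{-\frac13}N^o(0,0;z;q)\right) = - \frac{8q^{-\frac34}z^{-\frac32}\pi^2 i\,\eta^8(2\tau)}{\vartheta^3(u+\tau;2\tau)},
\end{equation*}
a generalization of the Atkin--Garvan rank--crank PDE \cite{At-Ga1}. Two features of this identity are essential and absent from your sketch: the right-hand side is an honest \emph{meromorphic Jacobi form} --- the heat operator annihilates the non-holomorphic corrections entirely, so $\Phi$ carries no mock or shadow piece at all --- and consequently its Taylor coefficients at $u=0$ are quasimodular by the same Jacobi-form fact used in Corollary \ref{introcor2}. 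Proving such an identity requires either elliptic-function theory or the completion/heat-operator machinery; termwise differentiation of the Lambert-type series produces higher-order-pole Lambert series that do not visibly collapse to an eta/theta quotient, so your proposed elementary derivation does not constitute a proof.

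There is also a secondary flaw in your closing induction. You invoke closure of quasimock modular forms under ``multiplication by quasimodular forms.'' With the paper's definition (linear combinations of derivatives of mock modular forms) this is not clear: the product of a quasimodular form with a (derivative of a) mock modular form is a mixed-mock object, since products of harmonic Maass forms with modular forms are in general no longer harmonic. The paper's argument never needs this, because the heat operator has \emph{constant} coefficients: unwinding the induction expresses $\frac{\partial^{2k}}{\partial u^{2k}}(\cdot)\big|_{u=0}$ as a constant-coefficient combination of $\partial_\tau^j$ applied to the base mock modular form and to the quasimodular forms $g_\ell$ coming from the right-hand side, which stays inside the defined class. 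If your coefficients $\alpha(q)$ and $c_{k,j}(q)$ were genuinely non-constant quasimodular forms, your final step would not go through without enlarging the definition of quasimock modular forms.
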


\section{Generalized odd Durfee symbols and odd rank moments}

The notation here and throughout is intended to be reminiscent of that of \cite{An1} and \cite{Br-Lo-Os1}.  By a \emph{generalized odd Durfee symbol} for the positive integer $n$ we mean a two-rowed array with a triple subscript,
\begin{equation*} %\label{genoddDurfeesymbol}
\begin{pmatrix}
a_1 & a_2 & \cdots & a_i \\
b_1 & b_2 & \cdots & b_j
\end{pmatrix}_{\lambda,\mu,t},
\end{equation*}
where $t \geq 0$, $\lambda = (\lambda_1, \lambda_2,\dots \lambda_h)$ and $\mu = (\mu_1,\mu_2,\dots,\mu_k)$ are partitions into distinct odd parts of size at most $2t-1$, each row is a partition into odd parts of size at most $2t+1$, and
$$
n = \left(a_1+a_2 \cdots + a_i\right) + \left(b_1+ b_2 + \cdots b_j \right) +  \left(\lambda_1+ \lambda_2 + \cdots \lambda_h \right) + \left(\mu_1+ \mu_2 + \cdots \mu_k \right) + 2t+1.
$$
For example, the two-rowed array
\begin{equation*} %\label{genoddDurfeeex}
\begin{pmatrix}
3 & 1 & 1  \\
9 & 7 & 3 & 3 & 3
\end{pmatrix}_{(7,3),(5,3,1),4}
\end{equation*}
is a generalized odd Durfee symbol for $58$.

We call these generalized odd Durfee symbols because when $\lambda$ and $\mu$ each contain all of the odd numbers between $1$ and $2t-1$, then we have one of the ordinary odd Durfee symbols of Andrews \cite{An1}.  It is natural then to define an odd number $2x-1$ as \emph{missing} from a partition $\nu$ into distinct odd parts of size at most $2t-1$ if $1 \leq x \leq t$ and $2x-1$ doesn't occur in $\nu$.  For instance, in the example above $t=4$ and so $\lambda = (7,3)$ has two missing numbers and $\mu = (5,3,1)$ has one missing number.  As Andrews did in the case of ordinary odd Durfee symbols, we define the \emph{rank} of an odd Durfee symbol to be the number of entries on the top row minus the number of entries on the bottom row of the generalized odd Durfee symbol.

It is now straightforward, using the elementary theory of partitions, to see that if $N^o(r,s,m,n)$ denotes the number of generalized odd Durfee symbols for $n$, where $r$ is the number of missing parts in $\lambda$, $s$ is the number of missing parts in $\mu$, and $m$
is the rank, then
\begin{equation*}
N^o(a,b;z;q) = \sum_{r,s,n \geq 0 \atop m \in \mathbb{Z}} N^o(r,s,m,n)a^rb^sz^mq^n.
\end{equation*}
Now the reader should have no trouble interpreting any given instance of $N^o(a,b;z;q)$ combinatorially.  To give an example, $N^o(1,-1;1;q)$ is the generating function for generalized odd Durfee symbols in which $\lambda = \mu$, each symbol being counted with the weight $(-1)^r$.

%So far what we've described is not very profound.  Indeed all we have done is taken the pieces of the defining $q$-series \eqref{fund} and arranged %them in a certain way to form our odd Durfee symbol.

To get at the functions $\mathcal{N}_{2k}^o(a,b;q)$, we consider the rank moments of generalized odd Durfee symbols.  The \emph{$k$th symmetrized rank moment} $\eta_k^o(r,s,n)$ is defined by
\begin{equation*}
\eta_k^o(r,s,n) := \sum_{m \in \mathbb{Z}} \begin{pmatrix} m + \lfloor \frac{k}{2} \rfloor \\ k \end{pmatrix}N^o(r,s,m,n).
\end{equation*}
In light of the invariance $z \leftrightarrow 1/z$ in \eqref{fund}, we have
\begin{equation} \label{invariance}
N^o(r,s,m,n) = N^o(r,s,-m,n),
\end{equation}
and hence $\eta_k^o(r,s,n) = 0$ whenever $k$ is odd.  As for $k$ even, we have the following:

\begin{theorem} \label{gfsymmoment}
For $k \geq 1$,
\begin{eqnarray*}
\sum_{r,s,n \geq 0} \eta_{2k}^o(r,s,n)a^r b^sq^n &=& \mathcal{N}_{2k}^o(a,b;q) \\
&=& \frac{\left(-aq,-bq;q^2\right)_{\infty}}{\left(q^2,abq^2;q^2\right)_{\infty}} \sum_{n \in \mathbb{Z}} \frac{\left(-q/a,-q/b;q^2\right)_n(-ab)^nq^{n^2+3n+1 + k(2n+1)}}{\left(1-q^{2n+1}\right)^{2k+1}\left(-aq,-bq;q^2\right)_{n+1}}.
\end{eqnarray*}
\end{theorem}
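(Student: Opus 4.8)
The plan is to prove the two equalities in turn, the first being a formal differentiation of the defining series and the second resting on a bilateral $q$-series identity.

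For the first equality I would insert the monomial expansion $N^o(a,b;z;q)=\sum_{r,s,n\ge 0,\,m\in\Z}N^o(r,s,m,n)a^rb^sz^mq^n$ into the definition \eqref{fund2} and differentiate term by term. Since $\lfloor 2k/2\rfloor=k$, everything reduces to the elementary identity
\[
\frac{1}{(2k)!}\frac{d^{2k}}{dz^{2k}}z^{m+k}\bigg|_{z=1}=\binom{m+k}{2k},
\]
valid for every $m\in\Z$ because the falling factorial on the left has exactly $2k$ factors. Summing the resulting coefficients $\binom{m+k}{2k}N^o(r,s,m,n)$ over $m$ recovers $\eta_{2k}^o(r,s,n)$, and the invariance \eqref{invariance} confirms that only even moments survive.

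The substance is the second equality, which I would deduce from the ``$k=0$, free $z$'' identity
\[
N^o(a,b;z;q)=\frac{(-aq,-bq;q^2)_\infty}{(q^2,abq^2;q^2)_\infty}\sum_{n\in\Z}\frac{(-q/a,-q/b;q^2)_n(-ab)^nq^{n^2+3n+1}}{(1-zq^{2n+1})(-aq,-bq;q^2)_{n+1}}.
\]
This is the heart of the argument and would be established by a $q$-series transformation of the unilateral series in \eqref{fund} into a bilateral (Appell--Lerch/Hecke-type) series, the infinite product emerging from a base-case summation; one also checks that the summand is antisymmetric under $n\mapsto-n-1$, so that the right-hand side is genuinely invariant under $z\mapsto 1/z$ as $N^o$ must be. Granting this, the theorem follows by applying $\tfrac{1}{(2k)!}\tfrac{d^{2k}}{dz^{2k}}(z^k\,\cdot\,)|_{z=1}$ termwise, using the closed evaluation
\[
\frac{1}{(2k)!}\frac{d^{2k}}{dz^{2k}}\left(\frac{z^k}{1-zw}\right)\bigg|_{z=1}=\frac{w^k}{(1-w)^{2k+1}},
\]
which I would verify by expanding $z^k/(1-zw)$ about $z=1$ and extracting the coefficient of $(z-1)^{2k}$. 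With $w=q^{2n+1}$ this inserts exactly the factor $q^{k(2n+1)}/(1-q^{2n+1})^{2k+1}$ into each summand, producing the stated series.

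I expect the main obstacle to be the bilateral identity: choosing the right transformation (a Bailey pair relative to $q^2$ followed by a bilateral summation, or a direct partial-fraction/Appell--Lerch manipulation) and correctly controlling convergence and the interchange of summation with the $2k$-fold derivative. A related subtlety is that the passage from the unilateral to the bilateral series may hold only up to an additive correction term, as in the product identity of Theorem \ref{intro.5}; any correction that is a symmetric Laurent polynomial in $z$ of degree at most $k-1$ is annihilated by the operator, since $\tfrac{1}{(2k)!}\tfrac{d^{2k}}{dz^{2k}}z^{k+j}|_{z=1}=\binom{k+j}{2k}=0$ for $-k\le j\le k-1$. This is precisely where the hypothesis $k\ge 1$ enters.
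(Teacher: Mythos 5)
Your architecture coincides with the paper's: the first equality by termwise differentiation of the monomial expansion of $N^o(a,b;z;q)$, a single-pole bilateral series for $N^o(a,b;z;q)$ as the key lemma, and then termwise application of $\tfrac{1}{(2k)!}\tfrac{d^{2k}}{dz^{2k}}(z^k\,\cdot\,)|_{z=1}$. The differentiation half of your argument is complete and correct: your closed evaluation
\[
\frac{1}{(2k)!}\frac{d^{2k}}{dz^{2k}}\left(\frac{z^k}{1-zw}\right)\bigg|_{z=1}=\frac{w^k}{(1-w)^{2k+1}}
\]
is exactly the paper's computation in disguise, since the paper expands by the Leibniz rule to get $\sum_{j=0}^{k}\binom{k}{j}\,w^{2k-j}/(1-w)^{2k-j+1}$ with $w=q^{2n+1}$ and collapses the $j$-sum by the binomial theorem, which is the same identity.

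The genuine gap is the bilateral identity itself. You correctly single it out as the heart of the argument, but you never establish it; ``a Bailey pair relative to $q^2$ followed by a bilateral summation, or a direct partial-fraction/Appell--Lerch manipulation'' names a family of techniques, not a proof, and all the $q$-series content of the theorem lives there. The paper proves it (its identity \eqref{Watsonid}) in two concrete steps: first, the $n\to\infty$ limiting case of the Watson--Whipple transformation \cite[p.242, Eq. (III.18)]{Ga-Ra1}, specialized via $(a,b,c,d,e,q)\to(q^2,zq,q/z,-q/a,-q/b,q^2)$ and applied to \eqref{fund}, with the unilateral sum converted to a bilateral one by the substitution $n\mapsto-n-1$ together with \eqref{negative}; this produces the symmetric two-pole form with $(1-q^{4n+2})/\bigl((1-zq^{2n+1})(1-q^{2n+1}/z)\bigr)$ in the summand. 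Second, the partial-fraction identity
\[
\frac{1-q^{4n+2}}{\left(1-zq^{2n+1}\right)\left(1-q^{2n+1}/z\right)}=\frac{1}{1-zq^{2n+1}}+\frac{z^{-1}q^{2n+1}}{1-q^{2n+1}/z},
\]
followed by another $n\mapsto-n-1$, yields your single-pole form; this step is also exactly where your desired $z\leftrightarrow 1/z$ symmetry check resides. One further point: your worry about an additive correction term in passing to the bilateral series is unfounded here---the identity is exact, with no analogue of the correction term appearing in Theorem \ref{intro.5}---although your observation that the operator kills symmetric Laurent polynomials of low degree (since $\binom{k+j}{2k}=0$ for $-k\le j\le k-1$) is correct and would indeed make the argument robust to such a term.
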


\begin{proof}
The first equality is straightforward from the definition \eqref{fund2}.  For the second, we begin with the identity
\begin{equation}
\begin{split}
 \label{Watsonid}
N^o(a,b;z;q) &= \frac{\left(-aq,-bq;q^2\right)_{\infty}}{2\left(q^2,abq^2;q^2\right)_{\infty}} \sum_{n \in \mathbb{Z}} \frac{\left(1-q^{4n+2}\right)\left(-q/a,-q/b;q^2\right)_n(-ab)^nq^{n^2+3n+1}}{\left(1-zq^{2n+1}\right)\left(1-q^{2n+1}/z\right)
\left(-aq,-bq;q^2\right)_{n+1}} \\
&=
\frac{\left(-aq,-bq;q^2\right)_{\infty}}{\left(q^2,abq^2;q^2\right)_{\infty}} \sum_{n \in \mathbb{Z}} \frac{\left(-q/a,-q/b;q^2\right)_n(-ab)^nq^{n^2+3n+1}}{\left(1-zq^{2n+1}\right)\left(-aq,-bq;q^2\right)_{n+1}}.
\end{split}
\end{equation}
The first equation follows from the case $(a,b,c,d,e,q) \rightarrow (q^2,zq,q/z,-q/a,-q/b,q^2)$ of a limiting case of the Watson-Whipple transformation \cite[p.242, Eq. (III.18), $n \to \infty$]{Ga-Ra1},
\begin{equation*} \label{Watson-Whipple}
\sum_{n=0}^{\infty}
\frac{\left(aq/bc,d,e\right)_n(\frac{aq}{de})^n}{\left(q,aq/b,aq/c\right)_n} =
\frac{\left(aq/d,aq/e\right)_{\infty}}{\left(aq,aq/de\right)_{\infty}}
\sum_{n=0}^{\infty}
\frac{\left(a,\sqrt{a}q,-\sqrt{a}q,b,c,d,e\right)_n(aq)^{2n}(-1)^nq^{n(n-1)/2}}
{\left(q,\sqrt{a},-\sqrt{a},aq/b,aq/c,aq/d,aq/e\right)_n(bcde)^n}.
\end{equation*}
Substituting $n \mapsto -n-1$ and simplifying using the identity
\begin{equation} \label{negative}
(a)_{-n} = \frac{(-1)^nq^{\frac{n(n+1)}{2} }}{a^n\left(a^{-1}q \right)_n}
\end{equation}
converts the unilateral sum to a bilateral sum.  The second equation follows from after applying the partial fraction identity
$$
\frac{1-q^{4n+2}}{(1-zq^{2n+1})(1-q^{2n+1}/z)} = \frac{1}{1-zq^{2n+1}} + \frac{z^{-1}q^{2n+1}}{1-q^{2n+1}/z},
$$
and again using the substitution $n \mapsto -n-1$ to simplify one of the two resulting series.

Next, we differentiate (as in \cite[p.63]{An1}) to obtain
$$
\begin{aligned}
\sum_{r,s,n \geq 0} & \eta_{2k}^o(r,s,n)q^n  %\\ &
= \mathcal{N}_{2k}^o(a,b;q) %\\&
= \frac{1}{(2k)!} \frac{d^{2k}}{dz^{2k}}\left(z^kN^o(a,b;z;q)\right) \bigg |_{z=1}
\\ &
= \frac{1}{(2k)!} \sum_{j=0}^{2k} \binom{2k}{j} k(k-1)\cdots(k-j+1)
 \frac{d^{2k-j}}{dz^{2k-j}}\left(N^{o}(a,b;z;q) \right) \bigg |_{z=1} \\
&= \sum_{j=0}^k \binom{k}{j} \frac{\left(-aq,-bq;q^2\right)_{\infty}}{\left(q^2,abq^2;q^2\right)_{\infty}} \sum_{n \in \mathbb{Z}} \frac{\left(-q/a,-q/b;q^2\right)_n(-ab)^nq^{n^2+3n+1+(2k-j)(2n+1)}}{\left(1-q^{2n+1}\right)^{2k-j+1}\left(-aq,-bq;q^2\right)_{n+1}} \\
%&= \frac{\left(-aq,-bq;q^2\right)_{\infty}}{\left(q^2,abq^2;q^2\right)_{\infty}} \sum_{n \in \mathbb{Z}} \frac{\left(-q/a,-q/b;q^2\right)_n(-ab)^nq^{n^2+3n+1+2k(2n+1)}}{\left(1-q^{2n+1}\right)^{2k+1}\left(-aq,-bq;q^2\right)_{n+1}}\left(1+\frac{(1-q^{2n+1})}{q^{2n+1}}\right)^k \\
& = \frac{\left(-aq,-bq;q^2\right)_{\infty}}{\left(q^2,abq^2;q^2\right)_{\infty}} \sum_{n \in \mathbb{Z}} \frac{\left(-q/a,-q/b;q^2\right)_n(-ab)^nq^{n^2+3n+1+k(2n+1)}}{\left(1-q^{2n+1}\right)^{2k+1}\left(-aq,-bq;q^2\right)_{n+1}},
\end{aligned}
$$
as desired.
\end{proof}

\begin{remark} \label{bigremark}
\emph{
In addition to the symmetrized rank moment, it is also useful to consider the ordinary rank moment $H_k^o(r,s,n)$, defined by
\begin{equation*}
H_k^o(r,s,n) := \sum_{m \in \mathbb{Z}} m^k N^o(r,s,m,n).
\end{equation*}
Let $\mathcal{H}_k^o(a,b;q)$ denote the three-variable generating function for $H_k^o(r,s,n)$, i.e.,
\begin{equation*}
\mathcal{H}_k^o(a,b;q) := \sum_{r,s,n \geq 0} H_k^o(r,s,n) a^rb^sq^n.
\end{equation*}
While $\mathcal{H}_{2k}^o(a,b;q)$ doesn't have a generating function as elegant as the one for $\mathcal{N}_{2k}^o(a,b;q)$ in Theorem \ref{gfsymmoment}, it does satisfy
\begin{equation*}
\mathcal{H}_{2k}^o(a,b;q) = \delta_z^{2k}\left(  N^o(a,b;z;q)\right) \bigg |_{z=1},
\end{equation*}
where
$\delta_z := z \frac{d}{dz}$,
and so it fits in more naturally with the theory of Jacobi forms.  Moreover, using the fact that $\mathcal{H}_{2k-1}^o(a,b;q) = 0$ (which follows from \eqref{invariance}) we have that any $\mathcal{N}_{2k}^o(a,b;q)$ may be written as a linear combination of the $\mathcal{H}_{2k}^o(a,b;q)$ (and vice versa).  Hence any automorphic properties are shared by these two generating functions.
}
\end{remark}

By a \emph{$k$-marked generalized odd Durfee symbol} for $n$ we mean a generalized odd Durfee symbol for $n$ where the entries in the array may now occur in $k$ colors (denoted by subscripts $1,\dots ,k$), such that
\begin{enumerate}
\item
The sequence of parts and the sequence of subscripts in each row is non-increasing.
 \item Each   of the subscripts $1,2,...,k-1$ occurs at least once in the top row.
  \item If  $M_1, N_2, \dots V_{k-2},W_{k-1}$ are the largest parts with their respective subscripts in the top row, then all parts in the bottom row with subscript $1$ lie in the interval $[1,M_1]$, with subscript $2$ lie in $[M_1,N_2]$, $\dots$, with subscript $k-1$ lie in $[V_{k-2},W_{k-1}]$, and with subscript $k$ lie in $[W_{k-1},t]$, where $t$ is the third subscript of the symbol.
\end{enumerate}
If  the subscripts $\lambda$ and $\mu$ have no missing numbers, then this   is precisely the definition of Andrews' $k$-marked odd Durfee symbols.

Let $\mathcal{D}_k^o(r,s,n)$ be the number of generalized $k$-marked odd
Durfee symbols having $r$ missing parts in the subscript $\lambda$ and $s$ missing parts in the subscript $\mu$.  For such a
symbol $\delta$ and for each $i$ we denote the number of entries
in the top (resp. bottom) row with subscript $i$ by
$\tau_i(\delta)$ (resp. $\beta_i(\delta)$).
We extend the definition of rank by defining the \textit{$i$th rank} of a generalized $k$-marked odd Durfee symbol $\delta$ to be
\begin{equation*} \label{ithrank}
\rho_i(\delta) :=
\begin{cases}
\tau_i(\delta) - \beta_i(\delta) - 1 & \text{for $1 \leq i <
k$},\\
\tau_i(\delta) - \beta_i(\delta)& \text{for $i = k$.}
\end{cases}
\end{equation*}

Let $\mathcal{D}_k^o(r,s,m_1,m_2,\dots,m_k,n)$ denote the number of
generalized $k$-marked odd Durfee symbols counted by $\mathcal{D}_k^o(r,s,n)$ with $i$th rank equal to $m_i$.
We have the following generating function:
%Let $D_k^0(a,b;q)$ denote the generating function
%$$
%D_k^0(a,b;q) := \sum_{r,s,n \geq 0} D_k^0(r,s,n)a^rb^sq^n.
%$$
\begin{theorem} \label{genDsymbolgf}
For $k \geq 2$ we have
\begin{equation} \label{genDsymbolgfeq}
\begin{aligned}
&\sum_{m_1,m_2,\dots,m_k \in \mathbb{Z}} \sum_{r,s,n \geq 0}
\mathcal{D}_k^o(r,s,m_1,m_2,\dots,m_k,n) x_1^{m_1}x_2^{m_2}\cdots
x_k^{m_k} d^re^sq^n \\
&= \frac{(-aq,-bq;q^2)_{\infty}}{(q^2,abq^2;q^2)_{\infty}} \sum_{n \geq 0} \frac{(1-q^{4n+2})(-q/a,-q/b;q^2)_n(-ab)^nq^{n^2+(2k+1)n+k}}
{(-aq,-bq;q^2)_{n+1}\prod_{i=1}^k(1-x_iq^{2n+1})(1-q^{2n+1}/x_i)}.
\end{aligned}
\end{equation}
\end{theorem}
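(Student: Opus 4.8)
The plan is to adapt Andrews' computation of the generating function for $k$-marked Durfee symbols \cite{An1}, in the form developed for overpartition pairs in \cite{Br-Lo-Os1}, to the present odd setting with the missing parts recorded by $a$ and $b$ (written $d,e$ on the left-hand side). I would first fix the third subscript $t$ and dissect a $k$-marked generalized odd Durfee symbol into three pieces: the central part $2t+1$, contributing $q^{2t+1}$; the partitions $\lambda,\mu$ into distinct odd parts of size at most $2t-1$, which contribute the missing-parts factor $\prod_{j=0}^{t-1}(a+q^{2j+1})(b+q^{2j+1}) = (-q/a,-q/b;q^2)_t\,(ab)^t$ exactly as in \eqref{fund}; and the two colored rows. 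The first two pieces are identical to the single-mark series $N^o(a,b;z;q)$ and are carried along verbatim, so the entire novelty lies in the colored rows.

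The heart of the argument is the generating function for the colored rows at fixed $t$, computed from the zone structure of condition (3). Since parts and subscripts are co-monotone in each row, the largest top-row parts $M_1\le N_2\le\cdots\le W_{k-1}$ of colors $1,\dots,k-1$ partition the bottom row into the nested intervals $[1,M_1],[M_1,N_2],\dots,[W_{k-1},t]$, while each of colors $1,\dots,k-1$ is forced to occur on top. Recording the $i$th rank $\rho_i$ by $x_i$ and summing over the breakpoints, the nested constraints telescope, and the forced top-entries of colors $1,\dots,k-1$ are exactly what account for both the $-1$ shifts in the definition of $\rho_i$ for $i<k$ and an extra power of $q$. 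I expect this telescoping --- collapsing the nested multiple sum into the single product $\prod_{i=1}^k (1-x_iq^{2n+1})^{-1}(1-q^{2n+1}/x_i)^{-1}$ attached to one shared summation variable --- to be the main obstacle, and it is precisely where I would follow \cite{An1,Br-Lo-Os1} most closely. The clean algebraic backbone is the observation that, writing $T_k(n)$ for the $n$th summand of the claimed right-hand side, one has the termwise recurrence
\[
T_k(n)=\frac{q^{2n+1}}{\left(1-x_kq^{2n+1}\right)\left(1-q^{2n+1}/x_k\right)}\,T_{k-1}(n),
\]
which organizes the whole computation as an induction on the number of marks: adjoining an unconstrained new outermost color $k$ supplies the new factor $(1-x_kq^{2n+1})^{-1}(1-q^{2n+1}/x_k)^{-1}$, while promoting the former last color $k-1$ to a mandatory color supplies the $q^{2n+1}$ and leaves the recorded rank $\rho_{k-1}$ unchanged.

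Finally I would pass from the raw series to the stated form by the same limiting Watson--Whipple transformation used in Theorem \ref{gfsymmoment} --- the $n\to\infty$ case of \cite[(III.18)]{Ga-Ra1} with $(a,b,c,d,e,q)\to(q^2,x_1q,q/x_1,-q/a,-q/b,q^2)$, together with the substitution $n\mapsto -n-1$ and \eqref{negative} --- which replaces the Durfee parameter $t$ by the new bilateral index $n$ and unfolds the full double Pochhammer of the colored rows into the single-factor product. The base case $k=1$ is $N^o(a,b;x_1;q)$, which by Theorem \ref{gfsymmoment} (that is, by \eqref{Watsonid}, whose summand is invariant under $n\mapsto -n-1$, so that the bilateral sum is twice the $n\ge 0$ sum) is exactly the asserted right-hand side with $k=1$; the same symmetry makes the general bilateral sum collapse to the unilateral sum over $n\ge 0$ displayed in the theorem, the factor $(1-q^{4n+2})$ appearing as in the first line of \eqref{Watsonid}. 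Iterating the termwise recurrence then assembles the product over all $k$ colors and accumulates the $q$-exponent to $n^2+(2k+1)n+k$, yielding the stated identity.
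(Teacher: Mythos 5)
Your combinatorial dissection (the central part $2t+1$, the factor $(ab)^t(-q/a,-q/b;q^2)_t$ from the missing parts of $\lambda$ and $\mu$, and the colored rows) matches the paper's setup, and your base case $k=1$ via \eqref{Watsonid} is correct. But the core of your argument has a genuine gap. The paper does not prove \eqref{genDsymbolgfeq} by induction on the number of marks: it invokes Andrews' \emph{$k$-fold generalization} of Watson's $q$-analogue of Whipple's theorem \cite[p.~43, Eq.~(2.4)]{An1} (with $k\mapsto k+1$, $N\to\infty$, $q\mapsto q^2$, $a=q^2$, $b_{k+1}=-q/a$, $c_{k+1}=-q/b$, $b_i=x_iq$, $c_i=q/x_i$), and that multi-variable identity is precisely what collapses the nested $k$-fold combinatorial sum into the single sum over $n$. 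Your proposed substitute, the termwise recurrence
\[
T_k(n)=\frac{q^{2n+1}}{\left(1-x_kq^{2n+1}\right)\left(1-q^{2n+1}/x_k\right)}\,T_{k-1}(n),
\]
is a true but trivial statement about the summands of the \emph{answer}, and it carries no proof content. To use it in an induction you would have to show that passing from $(k-1)$-marked to $k$-marked symbols acts on the single-sum representation termwise in $n$; but $n$ is not a statistic of the symbol --- it is an index that only emerges after the transformation (already in \eqref{Watsonid} the bilateral index has no combinatorial meaning; the Durfee parameter is gone). On the combinatorial side, adjoining a color adds a new summation variable $m_k$ and a nested, shifted-base Pochhammer factor $\left(x_kq^{2(m_1+\cdots+m_{k-1})+1},q^{2(m_1+\cdots+m_{k-1})+1}/x_k;q^2\right)_{m_k+1}^{-1}$ to the multiple sum; the assertion that this corresponds to termwise insertion of $q^{2n+1}\left(1-x_kq^{2n+1}\right)^{-1}\left(1-q^{2n+1}/x_k\right)^{-1}$ on the single-sum side \emph{is} one inductive step of the $k$-fold Watson--Whipple transformation --- the very thing that needs proof.

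This is compounded by your final step: the $n\to\infty$ case of \cite[Eq.~(III.18)]{Ga-Ra1} with $(a,b,c,d,e,q)\to(q^2,x_1q,q/x_1,-q/a,-q/b,q^2)$ cannot ``unfold the full double Pochhammer of the colored rows into the single-factor product.'' That one-variable transformation produces only the single pair $\left(1-x_1q^{2n+1}\right)^{-1}\left(1-q^{2n+1}/x_1\right)^{-1}$; it has no slots left for $x_2,\dots,x_k$, whose nested shifted-base Pochhammers require the full multi-variable identity. Your termwise recurrence can in fact be made rigorous --- it is the $\alpha$-side of a Bailey-chain iteration, which is essentially how Andrews proves his Eq.~(2.4) --- but that machinery (an interchange of summation plus a $q$-Pfaff--Saalsch\"utz-type evaluation at each step), or a direct citation of the $k$-fold transformation as in the paper, is exactly the ingredient your proposal is missing.
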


\begin{proof}
Following Andrews \cite{An1}, we begin by appealing to the $k$-fold generalization of Watson's $q$-analogue of Whipple's theorem \cite[p.43, Eq. (2.4)]{An1}.  In that identity we replace $k$ by $k+1$, let $N \to \infty$, replace $q$ by $q^2$, let $a=q^2$, $b_{k+1} = -q/a$, $c_{k+1} = -q/b$, and for each $1 \leq i \leq k$, set $b_i = x_iq$ and $c_i = q/x_i$.  After some simplification the result is
\begin{equation*}
\begin{aligned}
\sum_{m_1,m_2,\dots,m_k \geq 0}& (-q/a,-q/b;q^2)_{m_1+\cdots+m_k}(ab)^{m_1+\cdots+m_k}q^{2(m_1+\cdots+m_k)+1} \\
&\times \frac{q^{2m_1+1}}{(x_1q,q/x_1;q^2)_{m_1+1}} \times \frac{q^{2(m_1+m_2)+1}}{(x_2q^{2m_1+1},q^{2m_1+1}/x_2;q^2)_{m_2+1}} \times \cdots \\
&\times \frac{q^{2(m_1+\cdots+m_{k-1})+1}}{(x_{k-1}q^{2(m_1+\cdots+m_{k-2})+1},q^{2(m_1+\cdots+m_{k-2})+1}/x_{k-1};q^2)_{m_{k-1}+1}} \\
&\times \frac{1}{(x_{k}q^{2(m_1+\cdots+m_{k-1})+1},q^{2(m_1+\cdots+m_{k-1})+1}/x_{k};q^2)_{m_{k}+1}} \\
&= \frac{(-aq,-bq;q^2)_{\infty}}{(q^2,abq^2;q^2)_{\infty}} \sum_{n \geq 0} \frac{(1-q^{4n+2})(-q/a,-q/b;q^2)_n(-ab)^nq^{n^2+(2k+1)n+k}}
{(-aq,-bq;q^2)_{n+1}\prod_{i=1}^k(1-x_iq^{2n+1})(1-q^{2n+1}/x_i)}.
\end{aligned}
\end{equation*}
That the left hand side above is the generating function
$$
\sum_{m_1,m_2,\dots,m_k \in \mathbb{Z}} \sum_{r,s,n \geq 0}
\mathcal{D}_k^o(r,s,m_1,m_2,\dots,m_k,n) x_1^{m_1}x_2^{m_2}\cdots
x_k^{m_k} d^re^sq^n
$$
follows just as in \cite{An1}.  Indeed, the only difference between this multiple sum and the multiple sum in \cite[p. 64, Eq. (9.1)]{An1} is that
our $(-q/a,-q/b;q^2)_{m_1+\cdots+m_k}(ab)^{m_1+\cdots+m_k}$ is replaced by $q^{2(m_1+\cdots+m_k)^2}$ (corresponding to the subscripts $\lambda$ and $\mu$ of the symbol each having no missing parts).
\end{proof}

Setting $x_i=1$ in \eqref{genDsymbolgfeq} we may conclude:
\begin{corollary}
For $k \geq 1$ we have
$\eta_{2k}^o(r,s,n) = \mathcal{D}_{k+1}^o(r,s,n)$.
\end{corollary}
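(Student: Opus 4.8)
The plan is to specialize the generating function identity of Theorem \ref{genDsymbolgf} and match it against the formula for $\mathcal{N}_{2k}^o(a,b;q)$ supplied by Theorem \ref{gfsymmoment}. Concretely, I would apply \eqref{genDsymbolgfeq} with $k$ replaced by $k+1$ (legitimate since $k\geq 1$ gives $k+1\geq 2$) and then set $x_1=x_2=\cdots=x_{k+1}=1$. On the left-hand side, setting every $x_i=1$ and summing over all the ranks $m_1,\dots,m_{k+1}$ collapses the rank-refined counts into the totals $\mathcal{D}_{k+1}^o(r,s,n)$, so the left-hand side becomes $\sum_{r,s,n\geq 0}\mathcal{D}_{k+1}^o(r,s,n)a^rb^sq^n$ (identifying the formal variables $d,e$ with $a,b$). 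On the right-hand side, the product $\prod_{i=1}^{k+1}(1-x_iq^{2n+1})(1-q^{2n+1}/x_i)$ collapses to $(1-q^{2n+1})^{2k+2}$.

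Next I would simplify the resulting right-hand side using
$$\frac{1-q^{4n+2}}{(1-q^{2n+1})^{2k+2}} = \frac{1+q^{2n+1}}{(1-q^{2n+1})^{2k+1}},$$
which turns it into
$$\frac{\left(-aq,-bq;q^2\right)_{\infty}}{\left(q^2,abq^2;q^2\right)_{\infty}}\sum_{n\geq 0}\left(1+q^{2n+1}\right)g(n),$$
where
$$g(n) := \frac{\left(-q/a,-q/b;q^2\right)_n(-ab)^nq^{n^2+(2k+3)n+k+1}}{\left(1-q^{2n+1}\right)^{2k+1}\left(-aq,-bq;q^2\right)_{n+1}}.$$
A short check confirms that the exponent $n^2+(2k+3)n+k+1$ coincides with the exponent $n^2+3n+1+k(2n+1)$ in Theorem \ref{gfsymmoment}, and that $g(n)$ is precisely the summand of the bilateral series defining $\mathcal{N}_{2k}^o(a,b;q)$. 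Hence it remains only to prove
$$\sum_{n\in\mathbb{Z}}g(n) = \sum_{n\geq 0}\left(1+q^{2n+1}\right)g(n).$$

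I would establish this exactly as in the derivation of \eqref{Watsonid}, by writing the bilateral sum as $\sum_{n\geq 0}g(n)+\sum_{n\geq 0}g(-n-1)$ and applying the reflection formula \eqref{negative} (in base $q^2$) to each Pochhammer symbol occurring in $g(-n-1)$. After substituting and collecting powers of $q$, the factors $(ab)^{\pm(n+1)}$ and the reflected Pochhammer symbols recombine into the original ones, and one finds the clean relation $g(-n-1)=q^{2n+1}g(n)$. Feeding this back gives $\sum_{n\geq 0}g(-n-1)=\sum_{n\geq 0}q^{2n+1}g(n)$, which is exactly the second term on the right, completing the match of the two generating functions; comparing coefficients of $a^rb^sq^n$ then yields $\eta_{2k}^o(r,s,n)=\mathcal{D}_{k+1}^o(r,s,n)$.

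The main obstacle is purely computational: carefully applying \eqref{negative} to rewrite $g(-n-1)$ as a multiple of $g(n)$. One must track the signs $(-1)^{n+1}$, the factor $(-ab)^{-(n+1)}$, the reflected denominator $(1-q^{-2n-1})^{2k+1}=-q^{-(2n+1)(2k+1)}(1-q^{2n+1})^{2k+1}$, and the several quadratic $q$-exponents precisely, so that they telescope down to the single extra factor $q^{2n+1}$. This is the same unilateral-to-bilateral manipulation already used in the proof of Theorem \ref{gfsymmoment}, so no genuinely new difficulty arises; the only care needed is to verify that the residual power of $q$ is exactly $2n+1$ and not some other monomial.
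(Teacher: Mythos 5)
Your proposal is correct and takes essentially the same approach as the paper, whose proof is just the one line ``Setting $x_i=1$ in \eqref{genDsymbolgfeq} we may conclude,'' leaving the comparison with Theorem \ref{gfsymmoment} to the reader. The details you supply—replacing $k$ by $k+1$, collapsing the product to $(1-q^{2n+1})^{2k+2}$, and folding the unilateral sum into the bilateral one via the (correct) relation $g(-n-1)=q^{2n+1}g(n)$ obtained from \eqref{negative}—are precisely the implicit steps, reusing the same $n\mapsto -n-1$ manipulation from the derivation of \eqref{Watsonid}.
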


This ends our discussion of the combinatorics of the functions $N^o(a,b;z;q)$ and $\mathcal{N}_{2k}^o(a,b;q)$.  We are now ready to study their automorphic properties.

%Note that when $k=0$ we have
%$$
%\eta_0^0(r,s,n) = D_1^0(r,s,n) = \sum_{m \in \mathbb{Z}} f(r,s,m,n).
%$$
%This equality is a special case of the following:

%\begin{theorem} \label{intro2}
%For $k \geq 0$ we have
%\begin{eqnarray}
%\eta_{2k}^0(a,b;q) &=&
%D_{k+1}^0(a,b;q) \label{equality1} \\ &=&
% \frac{(-aq,-bq;q^2)_{\infty}}{(q^2,abq^2;q^2)_{\infty}} \sum_{n \in \mathbb{Z}} \frac{(-q/a,-q/b;q^2)_n(-ab)^nq^{n^2+3n+1 + %k(2n+1)}}{(1-q^{2n+1})^{2k+1}(-aq,-bq;q^2)_{n+1}}. \label{equality2}
%\end{eqnarray}
%\end{theorem}

%Our main interest in Theorem \ref{intro2} will not be the equality of the two seemingly different combinatorial functions in \eqref{equality1}, but in %the form of the generating function \eqref{equality2}.  Using this form we shall prove a result analogous to Theorem \ref{intro1} for $k \geq 2$.  In %this case we do not have mock theta functions, but quasimock theta functions (see ? for the definition).

%\begin{equation} \label{fundamental}
%\mathcal{N}_{2k}^o(a,b;z;q) =
%\frac{(-aq,-bq;q^2)_{\infty}}{(q^2,abq^2;q^2)_{\infty}} \sum_{n \in \mathbb{Z}} \frac{(-q/a,-q/b;q^2)_n(-ab)^nq^{n^2+3n+1 + %k(2n+1)}}{(1-zq^{2n+1})^{2k+1}(-aq,-bq;q^2)_{n+1}}.
%\end{equation}

\section{Modular forms and quasimodular forms}
In this section we prove Theorem \ref{intro.5} and Corollaries \ref{introcor1} and \ref{introcor2}.

\begin{proof}[Proof of Theorem \ref{intro.5}]
We require a $_3 \phi _2$ transformation \cite[p.241, Eq. (III.9)]{Ga-Ra1},
\begin{equation}\label{3phi2}
\sum_{n \geq 0} \frac{(a,b,c)_n\left(de/abc\right)^n}{(d,e,q)_n} = \frac{\left(e/a,de/bc\right)_{\infty}}{\left(e,de/abc;q\right)_{\infty}}
\sum_{n \geq 0} \frac{\left(a,d/b,d/c\right)_n(e/a)^n}{\left(d,de/bc,q\right)_n},
\end{equation}
as well as the $q$-Gauss summation \cite[p.236, Eq. (II.8)]{Ga-Ra1},
\begin{equation} \label{qGauss}
\sum_{n \geq 0} \frac{(a,b)_n \left(c/ab\right)^n}{(c,q)_n} = \frac{\left(c/a,c/b\right)_{\infty}}{\left(c,c/ab\right)_{\infty}}.
\end{equation}
Beginning with an application of \eqref{3phi2} with $(a,b,c,d,e,q) = (q^2,-q/a,-aq,q^3/z,zq^3,q^2)$, we compute
$$
\begin{aligned}
1 + &(z+a)(1+1/az)N^o(1/a,a;z;q) \\ &= 1 + q(z+a)(1+1/az)\sum_{n \geq 0}
\frac{(-aq^2/z,-q^2/az;q^2)_n(zq)^n}{(q/z,q^2;q^2)_{n+1}} \\
&= 1 + (1+a/z)(1+1/az)\sum_{n \geq 1}
\frac{(-aq^2/z,-q^2/az;q^2)_{n-1}(zq)^n}{(q/z,q^2;q^2)_{n}} \\
&= 1 + \sum_{n \geq 1}
\frac{(-aq^2/z,-q^2/az;q^2)_{n}(zq)^n}{(q/z,q^2;q^2)_{n}} \\
&= \sum_{n \geq 0}
\frac{(-a/z,-1/az;q^2)_{n}(zq)^n}{(q/z,q^2;q^2)_{n}}
= \frac{(-aq,-q/a;q^2)_{\infty}}{(zq,q/z;q^2)_{\infty}},
\end{aligned}
$$
%\begin{multline*}
%1 + (z+a)\left(1+1/az\right)N^o\left(1/a,a;z;q\right)
%= 1 + \left(1+a/z\right)\left(1+1/az\right)\sum_{n \geq 1}
%\frac{\left(-aq^2/z,-q^2/az;q^2\right)_{n-1}(zq)^n}{\left(q/z,q^2;q^2\right)_{n}} \\
%=
%\sum_{n \geq 0}
%\frac{\left(-a/z,-1/az;q^2\right)_{n}(zq)^n}{\left(q/z,q^2;q^2\right)_{n}} =
% \frac{\left(-aq,-q/a;q^2\right)_{\infty}}{\left(zq,q/z;q^2\right)_{\infty}},
%\end{multline*}
the last equality following from the case $(a,b,c,q) = (-a/z,-1/az,q/z,q^2)$ of \eqref{qGauss}.
\end{proof}

\begin{proof}[Proof of Corollaries  \ref{introcor1} and \ref{introcor2}]
The product on the right-hand side of \eqref{intro.5eq} is the quotient of two Jacobi forms (with modular variable $\tau$, where $q := e^{2\pi i \tau}$, and with different elliptic variables, say $u$ and $v$, where $z := e^{2 \pi i u}$ and $w := e^{2 \pi i v}$).   Specializing the elliptic variable of a Jacobi form at torsion points (i.e. points of the form $\mathbb{Q}\tau + \mathbb{Q})$ is known to give modular forms.  This yields Corollary \ref{introcor1}.  It is also a fact that if $F(u;\tau)$ is a Jacobi form then operating with
$\frac{\partial^{\ell}}{\partial u^{\ell}} | _{u=0}$
gives a quasimodular form for $\ell \geq 2$ (and a modular form for $\ell = 1$).
Appealing to Remark \ref{bigremark}, this then implies Corollary  \ref{introcor2}.
For more on Jacobi forms, the reader may consult \cite{Ei-Za1}.
\end{proof}
\section{Mock theta functions and mock modular forms} \label{MockSection}
The goal of this section is to prove Theorem \ref{explicit}, which is a more precise version of Theorem \ref{intro1}.  First we present some background, beginning with definitions of harmonic weak Maass forms (of half-integral weight), mock theta functions, and mock modular forms \cite{BF,Za1,Zw1}.

If $k\in \frac{1}{2}\Z\setminus \Z$, $\tau=x+iy$ with $x, y\in \R$,
then the \textit{weight $k$ hyperbolic Laplacian} is given by
\begin{equation*}\label{laplacian}
\Delta_k := -y^2\left( \frac{\partial^2}{\partial x^2} +
\frac{\partial^2}{\partial y^2}\right) + iky\left(
\frac{\partial}{\partial x}+i \frac{\partial}{\partial y}\right).
\end{equation*}
If $v$ is odd, then define $\epsilon_v$ by
\begin{equation*}
\epsilon_v:=\begin{cases} 1 \ \ \ \ &{\text {\rm if}}\ v\equiv
1\pmod 4,\\
i \ \ \ \ &{\text {\rm if}}\ v\equiv 3\pmod 4. \end{cases}
\end{equation*}
Moreover we let $\chi$ be a Dirichlet character.
 A {\it harmonic weak Maass form of weight $k$ with Nebentypus $\chi$ on a subgroup
$\Gamma \subset \Gamma_0(4)$} is any smooth function $M:\H\to \C$
satisfying the following:
\begin{enumerate}
\item For all $ \left(\begin{smallmatrix}a&b\\c&d
\end{smallmatrix} \right)\in \Gamma$ and all $\tau \in \H$, we
have
\begin{displaymath}
M \left(\frac{a \tau +b}{c \tau +d} \right)
= \leg{c}{d}\epsilon_d^{-2k} \chi(d)\,(c\tau+d)^{k}\ M(\tau).
\end{displaymath}
\item We  have that $\Delta_k(M)=0$.
\item The function $M$ has
at most linear exponential growth at all the cusps of $\Gamma$.
\end{enumerate}
We let $H_{k}\left(\Gamma, \chi\right)$ denote the space of harmonic weak Maass forms of weight $k$ with Nebentypus $\chi$ on a subgroup $\Gamma$. Every harmonic weak Maass form $M$ uniquely decomposes into a holomorphic and a non-holomorphic part.
 To be more precise, we let
  $\xi_{k}:=2iy^{k}\frac{\overline{\partial}}{\partial\overline{\tau}}$.
This differential operator defines a surjective map
\[
 \xi_{k} : H_{k}\left(\Gamma, \chi\right) \rightarrow M_{2-k}^!\left(\Gamma, \overline{\chi}\right),
\]
where $ M_{2-k}^!\left(\Gamma, \chi\right)$ is the space of weight $2-k$ weakly holomorphic modular forms (i.e., those modular forms that may have poles at the cusps of $\Gamma$)   with Nebentypus $\chi$ on $\Gamma$.
The holomorphic part $M^{+}$ of $M$ is a unilateral Fourier series,
$$
M^{+}(\tau) = \sum_{n \geq n_0} a(n)q^n,
$$
and if   $\xi(M)$ is a cusp form, then  the non-holomorphic part $M^{-}$ is a period integral,
\begin{equation} \label{period}
M^{-}(\tau) =
\int_{-\overline{\tau}}^{i\infty} \frac{g(z)}{(-i(z+\tau))^{k}} dz.
\end{equation}
We then call $g$ the   \emph{shadow} of $M^+$. 
It can be recovered from $M$  by
$\xi_{k}(M)=\xi_{k}(M^{-})=2^{1-k}i \overline{g(-\overline{\tau})} \in S_{2-k}\left(\Gamma, \overline{\chi}\right)$, where $g$ is as in \eqref{period}.

While Ramanujan presented a rough characterization of mock theta functions, all of the examples he wrote down are now known to be holomorphic parts of weight $1/2$ harmonic weak Maass forms.  Following Zagier \cite{Za1}, the holomorphic part of a harmonic weak  Maass form is called a \emph{mock modular form} in general, and a mock theta function when $k=1/2$.

%With the following group of theorems, we show that
%$N^o(a,b;z;q)$ is the holomorphic part of a harmonic Maass form when
%$z$ is a root of unity and $(a,b) = (0,0), (0,1/q), (0,-1), (1,-1)$, or $(1,1/q)$. We also give the associated shadow.

Next we recall important  work of Zwegers \cite{Zw1}.
For $\tau\in \H$, $u, v\in \C \setminus (\mathbb Z\tau + \mathbb
Z)$, Zwegers defined the Lerch sum
\begin{equation*}
\mu(u,v)=
\mu(u,v;\tau):=\frac{z^{\frac12}}{\vartheta(v)} \sum_{n\in
\Z}\frac{(-w)^nq^{n(n+1)/2}}{1-zq^n},
\end{equation*}
where $z:=e^{2\pi i u}$, $w:=e^{2\pi i v}$, $q:=e^{2\pi i \tau}$ and the Jacobi theta function
\begin{equation*}
\vartheta(v;\tau)=\vartheta(v):=\sum_{\nu\in \Z+\frac{1}{2}}e^{\pi i \nu}w^{\nu}
q^{\nu^2/2}.
\end{equation*}
We require the following facts about these functions
\begin{lemma}\label{muProperties}
Assume the notation above.
\begin{enumerate}
\item    We have
\begin{displaymath}
\begin{split}
\vartheta(u+\tau)&=- e^{  - \pi i \tau -2 \pi i u} \vartheta(u), \\
\vartheta(-u)&=- \vartheta(u), \\
\vartheta(u)&= - i q^{\frac18} \, z^{-\frac12}(q)_{\infty} (z)_{\infty} \left(z^{-1}q \right)_{\infty}.
\end{split}
\end{displaymath}
\item We have
\begin{displaymath}
\begin{split}
\mu(u,v)&=\mu(v,u),\\
\mu(u+1,v)&=-\mu(u,v),\\
z^{-1}wq^{-\frac{1}{2}}\mu(u+\tau,v)&=-\mu(u,v)-iz^{-\frac{1}{2}}w^{\frac{1}{2}}q^{-\frac{1}{8}},\\
\mu(u+ \tau, v + \tau)&= \mu(u,v),
 \\
\mu(u+w,v+w)-\mu(u,v)&=
\frac{1}{2 \pi i}
\frac{\vartheta'(0) \, \vartheta(u+v+w)\, \vartheta(w)}{\vartheta(u) \, \vartheta(v)\, \vartheta(u+w)\, \vartheta(v+w)} .
\end{split}
\end{displaymath}
\end{enumerate}
\end{lemma}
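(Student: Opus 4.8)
The plan is to verify the theta-function identities in part (1) by elementary manipulation of the defining series, to derive the transformation laws for $\mu$ in part (2) directly from its definition wherever a reindexing suffices, and to reserve the elliptic-function machinery of Zwegers \cite{Zw1} for the two genuinely nontrivial statements (the symmetry and the final addition-type identity). First I would treat part (1). Writing $z=e^{2\pi i u}$ and $\nu=n+\tfrac12$ with $n\in\Z$, the relation $\vartheta(-u)=-\vartheta(u)$ follows from the substitution $\nu\mapsto-\nu$ together with the fact that $e^{-2\pi i\nu}=-1$ for $\nu\in\Z+\tfrac12$. The quasiperiodicity $\vartheta(u+\tau)=-e^{-\pi i\tau-2\pi i u}\vartheta(u)$ follows from the substitution $\nu\mapsto\nu+1$ after completing the square in the exponent of $q$. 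Finally, the product expansion is precisely the Jacobi triple product identity applied to $\sum_{n}(-1)^n z^n q^{n(n+1)/2}$; after factoring out $iz^{1/2}q^{1/8}$ and rearranging the resulting $q$-Pochhammer symbols one obtains the stated infinite product.

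Next I would establish the ``easy'' parts of (2) directly. Since $e^{2\pi i(u+1)}=z$ is invariant while the prefactor $z^{1/2}=e^{\pi i u}$ changes sign, the anti-periodicity $\mu(u+1,v)=-\mu(u,v)$ is immediate. For $\mu(u+\tau,v+\tau)=\mu(u,v)$, the substitution sends $z\mapsto zq$ and $w\mapsto wq$, so that shifting the summation index $n\mapsto n-1$ in the Lerch series reproduces the original series up to an explicit factor of $-w^{-1}q^{-1}$; combining this with the theta transformation from part (1) applied to $\vartheta(v+\tau)$, and with the change in the prefactor $z^{1/2}$, all sign and $q$-power factors cancel and one recovers $\mu(u,v)$. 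The third identity is more delicate, because when only $u$ is shifted by $\tau$ the weights $(-w)^n$ are not shifted, so the reindexed Appell sum does not simply reproduce the original one. I would handle this with a partial-fraction splitting of $\tfrac{1}{1-zq^{n+1}}$: the telescoping part returns a multiple of $\mu(u,v)$, while the non-telescoping remainder collapses to a single bilateral theta sum which, after division by $\vartheta(v)$, produces exactly the inhomogeneous term $-iz^{-1/2}w^{1/2}q^{-1/8}$.

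The hard part will be the symmetry $\mu(u,v)=\mu(v,u)$ and the addition formula (in which $w$ denotes a third elliptic variable), neither of which reduces to series manipulation. For these I would follow Zwegers \cite{Zw1}: fix all but one complex variable and regard the difference of the two sides as a meromorphic function of the remaining variable. Using the transformation laws already established for $\mu$ and $\vartheta$, one checks that this difference transforms as an elliptic function for the lattice $\Z\tau+\Z$; a residue computation at the at-most-simple poles arising from the factors $1-zq^n$ and from the zeros of the theta quotients shows that all residues cancel, so the difference is a holomorphic elliptic function and hence constant, and evaluating at a convenient point (or along a degenerating limit) forces the constant to vanish. The main technical burden is the careful bookkeeping of poles and quasiperiods in this elliptic-function argument, which is precisely the content of \cite{Zw1}; I would therefore invoke those results rather than reproduce the full computation.
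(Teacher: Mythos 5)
Your sketch is correct, but there is nothing in the paper to compare it against line by line: the paper gives no proof of Lemma \ref{muProperties} at all, recording these identities as known facts from Zwegers' thesis \cite{Zw1}. Your proposal in effect reconstructs the proofs from that source, and it does so accurately. Part (1) is indeed elementary: writing $\vartheta(u)=iq^{\frac18}z^{\frac12}\sum_{n\in\Z}(-1)^nz^nq^{\frac{n(n+1)}{2}}$, the sign and quasi-periodicity laws follow from the reindexings you describe, and the Jacobi triple product (together with the rearrangement $(z^{-1};q)_\infty=-z^{-1}(1-z)(z^{-1}q;q)_\infty$, which accounts for the prefactor becoming $-iq^{\frac18}z^{-\frac12}$) gives the product formula. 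The two elementary shifts in part (2) work exactly as you say; for $\mu(u+\tau,v+\tau)$ the index shift produces the factor $-w^{-1}q^{-1}$, which cancels against $\vartheta(v+\tau)=-w^{-1}q^{-\frac12}\vartheta(v)$ and the prefactor $z^{\frac12}q^{\frac12}$. Your ``partial fraction'' idea for the inhomogeneous $\tau$-shift is also the right mechanism (and is Zwegers' own): after reindexing one uses $\frac{q^{-n}}{1-zq^{n}}=q^{-n}+\frac{z}{1-zq^{n}}$, where the second piece reproduces $-\mu(u,v)$ and the first collapses via $\sum_{n\in\Z}(-1)^nw^nq^{\frac{n(n-1)}{2}}=iw^{\frac12}q^{-\frac18}\vartheta(v)$ to the inhomogeneous term $-iz^{-\frac12}w^{\frac12}q^{-\frac18}$. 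Finally, you are right that the symmetry and the three-variable addition formula do not reduce to series manipulation and require the Liouville-type elliptic-function argument; deferring to \cite{Zw1} there is precisely what the paper does for the whole lemma. So your route and the paper's ultimately coincide (both rest on Zwegers); the added value of your write-up is that the routine identities are verified directly rather than merely cited.
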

%\begin{lemma}\label{SProperties}
%Assume the notation above.
%\begin{enumerate}
%\item    We have
%\begin{displaymath}
%S(u+1)=-S(u).
%\end{displaymath}
%\item
%We have
%$$
%S(u)+e^{- 2 \pi i u - \pi  i \tau } S(u+ \tau) = 2 e^{- \pi i  u- \pi i \tau/4}.
%$$
%\item
%We have
%\begin{displaymath}
%S(-u)=S(u).
%\end{displaymath}
%\end{enumerate}
%\end{lemma}

Zwegers used $\mu$ to construct harmonic weak Maass forms. To make
this precise, for $\tau\in \H$ and $u\in \C$, let
$c:=\im(u)/y$, and define
\begin{equation*}
R(u)=
R(u;\tau):=\sum_{\nu\in \Z+\frac{1}{2}} (-1)^{\nu-\frac{1}{2}}
\left\{\sgn(\nu)-E\left( (\nu+c)\sqrt{2y}\right) \right\}
e^{-2\pi i \nu u}q^{-\nu^2/2},
\end{equation*}
where $E(x)$ is the odd function
\begin{equation}\label{Efunction}
E(x):=2\int_{0}^{x}e^{-\pi u^2} du=\sgn(x)\left(1-\beta(x^2) \right),
\end{equation}
where for positive real $x$ we let
$\beta(x):=\int_{x}^{\infty}u^{-\frac{1}{2}}e^{-\pi u} du$.

Using
$\mu$ and $R$, Zwegers defined the real analytic function
\begin{equation*}\label{muHat}
\widehat{\mu}(u,v)
=
\widehat{\mu}(u,v;\tau):=\mu(u,v)+\frac{i}{2}R(u-v).
\end{equation*}
This function specializes at torsion points to give weight $1/2$ harmonic weak Maass forms.
This is apparent from the following theorem.

\begin{theorem}\label{ZwegersThm}
Assuming the notation and hypotheses above, we have that for $k,\ell,m, n \in \Z$
\begin{displaymath}
\begin{split}
\widehat{\mu}(u,v)&=\widehat{\mu}(v,u),\\
\widehat{\mu}\left( u+k \tau+ \ell, v + m \tau+n\right)
&= (-1)^{k+\ell+m+n} e^{\pi i(k-m)^2 \tau +2 \pi i (k-m)(u-v)}\,  \widehat{\mu}(u,v).
\end{split}
\end{displaymath}
Moreover, if
$A=\left(\begin{smallmatrix}\alpha&\beta\\\gamma&\delta\end{smallmatrix}\right)\in
\SL_2(\Z)$, then
\begin{displaymath}
\widehat{\mu}\left(\frac{u}{\gamma\tau+\delta},\frac{v}{\gamma\tau+\delta};\frac{\alpha\tau+\beta}
{\gamma\tau+\delta}\right)=
\chi(A)^{-3}(\gamma\tau+\delta)^{\frac{1}{2}}e^{-\pi i
\gamma(u-v)^2/(\gamma\tau+\delta)} \cdot \widehat{\mu}(u,v;\tau),
\end{displaymath}
where
$\chi(A):=\eta\left(\frac{\alpha \tau +\beta}{\gamma \tau +\delta} \right)/\left((\gamma\tau+\delta)^{\frac{1}{2}}\eta(\tau)\right)$.
\end{theorem}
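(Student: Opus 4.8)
The plan is to treat the three assertions in turn, reducing each to the defining relation $\widehat{\mu}(u,v)=\mu(u,v)+\tfrac{i}{2}R(u-v)$ together with the properties of $\mu$ and $\vartheta$ recorded in Lemma \ref{muProperties}. For the symmetry $\widehat{\mu}(u,v)=\widehat{\mu}(v,u)$, I would first recall that $\mu(u,v)=\mu(v,u)$ is given, so it suffices to show that $R$ is even, i.e. $R(u-v)=R(v-u)$. This I would verify directly from the defining series: substituting $\nu\mapsto-\nu$ and simultaneously $u\mapsto-u$ (which sends $c=\im(u)/y$ to $-c$), the factor $(-1)^{\nu-\frac12}$ picks up a sign, while the bracket $\{\sgn(\nu)-E((\nu+c)\sqrt{2y})\}$ also picks up a sign, since $\sgn$ is odd and $E$ is odd by \eqref{Efunction}; the exponential $e^{-2\pi i\nu u}$ and the factor $q^{-\nu^2/2}$ are invariant. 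The two sign changes cancel, giving $R(-u)=R(u)$ and hence the symmetry.

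For the elliptic transformation law I would reduce the general shift to the four elementary ones $u\mapsto u+1$, $v\mapsto v+1$, $u\mapsto u+\tau$, $v\mapsto v+\tau$ and then iterate; by the symmetry just proved, the $v$-shifts follow from the $u$-shifts. The integer shifts are immediate: since $\nu\in\Z+\tfrac12$ we have $e^{-2\pi i\nu}=-1$, so $R(u+1)=-R(u)$, and combined with $\mu(u+1,v)=-\mu(u,v)$ from Lemma \ref{muProperties} this gives $\widehat{\mu}(u+1,v)=-\widehat{\mu}(u,v)$, matching $(k,\ell,m,n)=(0,1,0,0)$. The real work is the $\tau$-shift: here $\mu(u+\tau,v)$ acquires the additive anomaly displayed in the third relation of Lemma \ref{muProperties}(2), and I would derive the matching transformation of $R(u-v+\tau)$ directly from its series, re-indexing the sum and tracking how the incomplete integral $E((\nu+c)\sqrt{2y})$ changes when $c\mapsto c+1$. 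The completion $R$ is designed so that $\tfrac{i}{2}$ times its anomaly exactly cancels the additive term in $\mu$'s transformation, leaving the clean factor $(-1)^{k+\ell+m+n}e^{\pi i(k-m)^2\tau+2\pi i(k-m)(u-v)}$; the diagonal shift $u,v\mapsto u+\tau,v+\tau$ is a useful consistency check, since there Lemma \ref{muProperties}(2) gives $\mu(u+\tau,v+\tau)=\mu(u,v)$ while $R(u-v)$ is unchanged, matching the case $k=m$ in which the exponential factor degenerates to $1$.

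For the modular transformation it suffices to verify the cocycle on the two generators $T=\left(\begin{smallmatrix}1&1\\0&1\end{smallmatrix}\right)$ and $S=\left(\begin{smallmatrix}0&-1\\1&0\end{smallmatrix}\right)$ of $\SL_2(\Z)$ and then extend by the multiplier $\chi$. The case of $T$ fixes $u,v$ and sends $\tau\mapsto\tau+1$: here I would read the phase $e^{-\pi i/4}$ off the series for $\mu$, via $\vartheta(v;\tau+1)=e^{\pi i/4}\vartheta(v;\tau)$ (which follows from $\nu^2\equiv\tfrac14\pmod 2$ for $\nu\in\Z+\tfrac12$), and check the identical phase for $R$, recovering $\chi(T)^{-3}=e^{-\pi i/4}$ with no Gaussian factor since $\gamma=0$. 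The genuinely hard step is $S$, where $\tau\mapsto-1/\tau$ and $u,v\mapsto u/\tau,v/\tau$: the Appell--Lerch sum $\mu$ does not transform cleanly but acquires a Mordell-type integral anomaly, and the whole point of the completion is that the $S$-transformation of $R$ supplies exactly this integral with opposite sign. I would obtain the modular behavior of $R$ either by writing it as a non-holomorphic Eichler (period) integral of a unary weight $\tfrac32$ theta function and invoking the transformation of that theta function, or by a direct Poisson-summation (Vign\'eras-type) argument applied to the error-function series; in either route the automorphy factor $(\gamma\tau+\delta)^{1/2}$ and the Gaussian phase $e^{-\pi i\gamma(u-v)^2/(\gamma\tau+\delta)}$ emerge, and the Mordell integrals cancel. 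This cancellation --- establishing the $S$-transformation of $R$ and matching it against the Mordell anomaly of $\mu$ --- is where essentially all of the difficulty lies; the remaining bookkeeping of eta-multipliers needed to assemble $\chi(A)^{-3}$ for general $A$ is routine once $S$ and $T$ are settled.
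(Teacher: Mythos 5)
The paper itself gives no proof of this theorem: it is recalled verbatim from Zwegers' thesis \cite{Zw1}, so there is no internal argument to compare against, and the right benchmark is Zwegers' original proof. Your outline is a faithful reconstruction of exactly that proof --- evenness of $R$ (via the reindexing $\nu\mapsto-\nu$ and the oddness of $\sgn$ and $E$) for the symmetry, reduction of the elliptic law to the four elementary shifts with the additive anomalies of $\mu$ and $R$ cancelling under $u\mapsto u+\tau$, and the generator-by-generator check of the modular law, where the $T$-phase $\chi(T)^{-3}=e^{-\pi i/4}$ is elementary and the $S$-transformation rests on the Mordell-integral anomaly of $\mu$ being cancelled by that of $R$; all the computations you do spell out are correct. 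The only caveat is that the crux, the $S$-transformation of $R$, is described rather than executed, but either route you name (writing $R$ as a period integral of a unary weight $\tfrac32$ theta function, or a direct Poisson-summation argument) is precisely how it is carried out in \cite{Zw1}.
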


\begin{remark} \label{JacobiRemark}
\emph{Note that from Theorem \ref{ZwegersThm}  one can conclude that for $a,b \in \Q$ the function
\begin{equation} \label{Jacobig}
g(u;\tau):=
z^a\, q^{-\frac{a^2}{2}} \mu(u,a \tau+b)
\end{equation}
can be completed to   a non-holomorphic Jacobi form of weight $\frac12$ and index $-\frac12$ for some subgroup and some
multiplier. To be more precise, it turns out, that one obtains by this completion a so-called harmonic
Maass Jacobi form (see \cite{BR} for the precise definition).}
\end{remark}
The function $R$ can also be written as a theta-integral.
\begin{proposition} \label{IntProp}
For $a,b \in \R$ we define
\[
g_{a,b}(\tau):= \sum\limits_{n\in a+\Z}  n e^{\pi i n^2 \tau+2\pi i n b}.
\]
Then, for $a \in \left(-\frac12,\frac12 \right]$ and $b \in \R$, we have
$$
\int_{- \overline{\tau}}^{i \infty}
\frac{g_{a+\frac12,b+\frac12} (w)}{\sqrt{ -i( \tau+w) }} \, dw
= - e^{- \pi i a^2 \tau + 2 \pi i a \left(b + \frac12 \right) }
R(a \tau-b) + i  \delta_{\frac12,a},
$$
where  $\delta_{\frac12,a}=0$, unless $a=\frac12$ in which case it equals $1$.
\end{proposition}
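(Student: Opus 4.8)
The plan is to insert the theta series defining $g_{a+\frac12,b+\frac12}$ into the integral, interchange summation and integration (justified by absolute convergence, since along the contour each summand decays like a Gaussian in the summation index), and evaluate the resulting elementary integrals by recognizing them as the incomplete Gamma integrals that define $\beta$. First I would parametrize the path by the vertical ray $w=-\overline{\tau}+it$, $t\in[0,\infty)$, which lies in $\H$ and along which $-i(\tau+w)=2y+t>0$, so that $\sqrt{-i(\tau+w)}=\sqrt{2y+t}$ is unambiguous and positive. Writing $g_{a+\frac12,b+\frac12}(w)=\sum_{n\in a+\frac12+\Z} n\,e^{\pi i n^2 w+2\pi i n(b+\frac12)}$, the left-hand side becomes $\sum_n n\,e^{2\pi i n(b+\frac12)}I_n$ with $I_n:=\int_{-\overline{\tau}}^{i\infty}\frac{e^{\pi i n^2 w}}{\sqrt{-i(\tau+w)}}\,dw$. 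The index $n=0$ (which occurs only when $a=\frac12$) drops out because of the factor $n$. For $n\neq 0$, the substitutions $v=2y+t$ and then $r=n^2v$ turn $I_n$ into $\frac{i}{|n|}e^{-\pi i n^2\overline{\tau}}e^{2\pi n^2 y}\beta(2yn^2)$; since $n\,|n|^{-1}=\sgn(n)$ and $e^{-\pi i n^2\overline{\tau}}e^{2\pi n^2 y}=e^{-\pi i n^2 x}e^{\pi n^2 y}$, each summand acquires the shape $i\,\sgn(n)\,\beta(2yn^2)\,e^{2\pi i n(b+\frac12)}e^{-\pi i n^2 x}e^{\pi n^2 y}$.

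Next I would match this against the right-hand side. Setting $u=a\tau-b$ gives $c=\im(u)/y=a$, so $R(a\tau-b)$ is a sum over $\nu\in\Z+\frac12$. Reindexing the left side by $n=\nu+a$, a bijection of $a+\frac12+\Z$ with $\Z+\frac12$, makes both sides sums over $\nu\in\Z+\frac12$, and the common factor $\sgn(\nu+a)\beta(2y(\nu+a)^2)$ emerges after expanding $E$ in $R$ via $E(x)=\sgn(x)(1-\beta(x^2))$. This splits $R(a\tau-b)$ into a ``$\beta$-part'' $R_1$ and a ``sign-discrepancy part'' $R_2$ coming from $\sgn(\nu)-\sgn(\nu+a)$. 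Comparing the remaining exponential prefactors, and using $e^{\pi i \nu}=i(-1)^{\nu-\frac12}$ for $\nu\in\Z+\frac12$ together with $a^2+2\nu a+\nu^2=(\nu+a)^2$ to recombine the $x$- and $y$-dependent factors, I would show term by term that the left side equals $-e^{-\pi i a^2\tau+2\pi i a(b+\frac12)}R_1$.

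It then remains to treat $R_2$, which is the main subtlety. For $a\in(-\frac12,\frac12)$ one has $\sgn(\nu)=\sgn(\nu+a)$ for every $\nu\in\Z+\frac12$, so $R_2=0$ and $\delta_{\frac12,a}=0$, and the identity follows. The boundary case $a=\frac12$ is exactly where $\nu=-\frac12$ gives $\nu+a=0$, so that $\sgn(\nu)-\sgn(\nu+a)=-1-0=-1$ produces a single nonzero term in $R_2$. I expect the crux to be checking that $-e^{-\pi i a^2\tau+2\pi i a(b+\frac12)}R_2$ equals precisely $-i$ in this case, so that it cancels the $+i\,\delta_{\frac12,a}$ on the right; this is a short direct evaluation at $a=\frac12$, $\nu=-\frac12$. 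I would also confirm that this same $\nu=-\frac12$ term contributes nothing to $R_1$ nor to the left side, both vanishing because $\sgn(0)=0$, so that no double counting occurs.

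In summary, the genuine work is not any single deep step but the careful bookkeeping of the exponential factors in the term-by-term comparison and, above all, the correct handling of the sign function at the boundary point $a=\frac12$, where the discrete term $i\,\delta_{\frac12,a}$ is produced and exactly absorbed by $R_2$.
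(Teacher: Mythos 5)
Your proof is correct, and I verified the key computations: the termwise integral evaluation $I_n=\frac{i}{|n|}e^{-\pi i n^2\overline{\tau}}e^{2\pi n^2 y}\beta(2yn^2)$, the reindexing $n=\nu+a$ with the sign bookkeeping $e^{-\pi i\nu}=-i(-1)^{\nu-\frac12}$ that makes the left side match $-e^{-\pi i a^2\tau+2\pi i a(b+\frac12)}R_1$ term by term, and the boundary evaluation at $a=\frac12$, $\nu=-\frac12$, where $-e^{-\pi i a^2\tau+2\pi i a(b+\frac12)}R_2=-i$ exactly cancels the $+i\,\delta_{\frac12,a}$. However, you should know that the paper does not actually prove this proposition at all: it simply remarks that the case $a\in\left(-\frac12,\frac12\right)$ is Theorem 1.16 (1) of Zwegers' thesis and that the case $a=\frac12$ ``can be proved along the same lines.'' So your argument is not a different route so much as the route the paper leaves implicit — Zwegers' own proof is likewise a termwise integration of the theta series compared against the defining series for $R$ via $E(x)=\sgn(x)\left(1-\beta(x^2)\right)$. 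What your write-up buys is precisely what the paper omits: a self-contained treatment in which both cases are handled uniformly, and in which the origin of the discrete term $i\,\delta_{\frac12,a}$ is made transparent as the sign discrepancy $\sgn(\nu)-\sgn(\nu+a)=-1$ at the single index $\nu=-\frac12$ (with the corresponding $n=0$ term harmlessly vanishing on both sides since $\sgn(0)=0$ and the theta coefficient is $n$). One small point worth stating explicitly in a final version: the interchange of sum and integral should cite Tonelli's theorem with the bound $\left|e^{\pi i n^2 w}\right|=e^{-\pi n^2(y+t)}$ along the ray $w=-\overline{\tau}+it$, which you gesture at but do not spell out.
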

We note that the case $a \in \left(-\frac12,\frac12 \right)$ is Theorem 1.16 (1)
of \cite{Zw1} and that the case $a= \frac12$ can be proved along the same lines.

We are now ready to prove the main theorem of this section.
%Moreover we  require higher level forms
%$$
%A_{\ell}(u,v;\tau):=
%z^{\frac{\ell}{2}}
%\sum_{n \in \Z}
%\frac{(-1)^{\ell n} q^{\frac{\ell n(n+1)}{2}} w^n}{1-zq^n} .
%$$
%This function is related to $\mu$ via
%\begin{eqnarray} \label{Al}
%A_{\ell}(u,v;\tau)&=& =z^{\frac{\ell-1}{2}}
%\frac{1}{\ell} \sum_{k \pmod{\ell}} A_1 \left(u,\frac{v+k}{\ell} +\frac{\tau(\ell-1)}{2 \ell};\frac{\tau}{\ell}  \right),\\
%\label{Amu}
%A_1(u,v;\tau)&=& \mu(u,v;\tau) \vartheta(v;\tau).
%\end{eqnarray}

\begin{theorem} \label{explicit}
Let $z$ be a root of unity.
\begin{enumerate}
%111111111111111111111111111111111111111111111111111111111111
\item
The function
$q^{-\frac13}  N^o(0,0;z;q)$
is  a mock theta function with shadow
\[
 -\frac{i}{\sqrt{6}} \sum\limits_{n \equiv 1 \pmod   3}(-1)^{\frac{n-1}{3}} nq^{\frac{n^2}{3}} (z^{-n}+z^{n})
 .
\]
%222222222222222222222222222222222222222222222222222222222222
\item
The function
$q^{-\frac{1}{2}} N^o\left(0,\frac{1}{q};z;q \right)$
is a mock theta function with shadow
\[
- \frac{i}{2} \sum\limits_{n\equiv 1\pmod 2} (-1)^{\frac{n-1}{2}} n q^{\frac{n^2}{2}}z^{-n}
.
\]

%3333333333333333333333333333333333333333333333333333333333333
\item
The function
$q^{-\frac18} N^o(0,-1;z;q)$
is a mock modular form.
For $z\not=1$ is a mock theta function and  its shadow is given by
\[
\frac{i}{4} \frac{z^{\frac{1}{2}}}{1-z}\sum\limits_{n\equiv 1 \pmod 4} n q^{\frac{n^2}{8}} \left(z^{\frac{n}{2}}+z^{-\frac{n}{2}}\right)
.
\]
For $z=1$ it has weight $\frac32$ and shadow
\[
 \frac{i}{2\pi} \frac{\eta^2(2\tau)}{\eta(\tau)}.
\]

%4444444444444444444444444444444444444444444444444444444444444
\item
The   function
$N^o(1,-1;z;q)$ is a mock modular form. For  $z\not=\pm1$ it is  a mock theta function.
Its shadow is given by
\[
 -\frac{\sqrt{2}iz}{1-z^2}\sum\limits_{n\in \Z} n q^{n^2} z^{n}
.
\]\
For $z=\pm1$ it is a mock modular form of weight $\frac32$. For $z=1$ the shadow is
\[
-\frac{i}{ 4\sqrt{2}\pi} \Theta(\tau)
\]
and for   $z=-1$  it has the  shadow
\[
\frac{i}{ 4\sqrt{2}\pi} \Theta\left(\tau+\frac12\right).
\]

%55555555555555555555555555555555555555555555555555555555555
 \item
 The   function
 $q^{-\frac14} N^o\left( 1,\frac{1}{q};z;q\right)$
 is a mock modular  form.
 For $z\not=-1$ it is a mock theta function and its shadow is given by
 \[
  -\frac{i}{ 2\sqrt{2}}\frac{z^{\frac12} }{1+z}\sum\limits_{n\equiv 1 \pmod 2} (-1)^{\frac{n-1}{2}} n q^{\frac{n^2}{4}} z^{\frac{n}{2}}
.
 \]
For $z=-1$ it is a mock modular form of weight $\frac32$ with shadow
\[
  \frac{-1}{2 \sqrt{ 2}\pi } \frac{\eta^2(4\tau)}{\eta(2\tau)}
.
\]

\end{enumerate}
\end{theorem}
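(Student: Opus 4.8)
The plan is to derive every assertion from the bilateral representation \eqref{Watsonid}, which for each of the five admissible pairs $(a,b)$ collapses into a single Appell--Lerch series that can be matched against Zwegers' function $\mu(u,v)$. First I would substitute each pair $(a,b)\in\{(0,0),(0,1/q),(0,-1),(1,-1),(1,1/q)\}$ into the second line of \eqref{Watsonid}. In each case the quotient $(-q/a,-q/b;q^2)_n(-ab)^n/(-aq,-bq;q^2)_{n+1}$ degenerates to an elementary Gaussian factor; for instance, letting $a,b\to 0$ sends $(-q/a,-q/b;q^2)_n(ab)^n$ to $q^{2n^2}$, so that the right-hand side becomes an infinite product times a series of the shape $\sum_{n\in\Z}\varepsilon^n q^{\alpha n^2+\beta n}/(1-zq^{2n+1})$.

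Next I would recognize such a series as a specialization of the Lerch sum. Replacing the modular variable by $q\mapsto q^2$ and absorbing the shift in the exponent $2n+1$ into the elliptic variable, one writes the series, up to an explicit theta quotient and a power of $q$, in terms of $\mu(u,v)$ evaluated in the nome $q^2$, for suitable torsion points $u,v$ determined by $z$ and by the exponents $\alpha,\beta$. Using Remark \ref{JacobiRemark} together with Theorem \ref{ZwegersThm}, the completion $\widehat{\mu}(u,v)=\mu(u,v)+\tfrac{i}{2}R(u-v)$ is then modular of weight $\tfrac12$, which identifies each $N^o$, after the stated power of $q$, as a mock modular form. To pin down the shadow I would invoke Proposition \ref{IntProp}: the nonholomorphic correction $\tfrac{i}{2}R(u-v)$ is the period integral of a unary theta series $g_{a',b'}$, and reading off $\xi_{1/2}$ via $\xi_k(M)=2^{1-k}i\,\overline{g(-\overline{\tau})}$ produces exactly the weight $\tfrac32$ theta functions written in the statement. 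The normalizing theta quotient supplies the $z$-dependent factors such as $z^{1/2}/(1-z)$, $z/(1-z^2)$, and $z^{1/2}/(1+z)$ appearing in front of each shadow.

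The main obstacle is the exceptional set $z=\pm1$ (together with $z=1$ for $(0,-1)$ and $z=-1$ for $(1,1/q)$), where those prefactors acquire a pole, so the generic weight $\tfrac12$ computation breaks down. At such $z$ the elliptic variable becomes a torsion point where $\vartheta(v)=0$ and $\mu$ itself is singular; the resolution is to pass to the limit in $z$, where the simple pole of the series cancels against the vanishing theta and, by Lemma \ref{muProperties}(2), one picks up an extra elementary term. The surviving object is no longer the weight $\tfrac12$ piece but a genuine weight $\tfrac32$ mock modular form whose shadow is the theta function obtained by differentiation, which accounts for the eta-quotients $\eta^2(2\tau)/\eta(\tau)$ and $\eta^2(4\tau)/\eta(2\tau)$ and the theta series $\Theta(\tau)$ and $\Theta(\tau+\tfrac12)$. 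I expect this degeneration analysis, rather than the generic identification, to carry the bulk of the work, and it is precisely these cases that feed into the class-number identities of Theorem \ref{intro1.5}.
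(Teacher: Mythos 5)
Your overall strategy coincides with the paper's: specialize \eqref{Watsonid}, match the resulting bilateral sums with Zwegers' $\mu$ at torsion points, complete via $\widehat{\mu}$, extract the shadows through Proposition \ref{IntProp}, and handle the exceptional roots of unity by a limiting argument that raises the weight to $\tfrac32$. However, there is a concrete gap in your first reduction. Your claim that every specialization of \eqref{Watsonid} is an infinite product times a series of the shape $\sum_{n\in\Z}\varepsilon^n q^{\alpha n^2+\beta n}/(1-zq^{2n+1})$ is false for three of the five pairs: for $(a,b)=(0,-1)$, $(1,-1)$ and $(1,1/q)$ the factors $(-aq,-bq;q^2)_{n+1}$ contribute the additional pole factors $(1-q^{2n+1})$, $(1-q^{4n+2})$ and $(1+q^{2n+1})$ respectively, so these sums are not specializations of a single Lerch series and the identification with $\mu$ fails as stated. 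The paper bridges this by a partial-fraction decomposition in $q^{2n+1}$ combined with the bilateral vanishing identities \eqref{vanish} and \eqref{sumvanish} and, for $(1,-1)$, the theta evaluations \eqref{thetasum}; only after these steps does a single $\mu$-term survive. This is also where the prefactors $z^{\frac12}/(1-z)$, $z/(1-z^2)$ and $z^{\frac12}/(1+z)$ actually originate: they are partial-fraction coefficients, not contributions of ``the normalizing theta quotient'' as you assert.

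The same omission is what makes the exceptional triples exceptional, in a more structural way than your ``prefactor acquires a pole'' heuristic: at $z=1$ for $(0,-1)$, $z=\pm1$ for $(1,-1)$, and $z=-1$ for $(1,1/q)$, the two pole factors coincide and the series has a genuine double pole $(1\mp q^{2n+1})^{-2}$, i.e.\ it is a $z$-derivative of a Lerch sum. (Also, your assertion that $\vartheta(v)=0$ and that $\mu$ itself is singular at these points is not what happens; rather, the pole of the partial-fraction prefactor cancels against a zero of the accompanying $\mu$-combination, e.g.\ via $q^{-\frac14}\mu\left(-\tau,\tfrac12;2\tau\right)=\tfrac12$ in case (4).) The paper implements your limit idea precisely as a derivative: it writes $N^o$ at the exceptional point as $\frac{d}{dz}\left(\Phi_i(u;\tau)\right)\big|$ of a weight-$\tfrac12$ Jacobi-type object, differentiates the $R$-correction to obtain incomplete Gamma series, and converts those into period integrals of weight-$\tfrac32$ unary theta functions via \eqref{nonholintegral}, which is how the eta-quotient and $\Theta$ shadows arise. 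So your outline is the right one, but without the partial-fraction and vanishing-identity step it does not get off the ground for cases (3)--(5), either at generic $z$ or at the exceptional points.
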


%\begin{remark}%
%\textbf{recheck remark}
%\emph{By slightly abusing  notation, we say that for $u,v \in \mathbb{Q}+ \mathbb{Q} \tau$, the function
%$g_{a+\frac12,b+\frac12}$ is the shadow of $\mu(u,v)$with $u-v= a \tau+b$.
%Up to constants this agrees with our former notation a mock theta function with shadow
%$-\frac{i}{2} e^{2 \pi ia b} g_{-a+\frac12,b-u+\frac12}(\tau)$.
%More generally    the non-holomorphic part of a harmonic weak Maass form of weight $k$ may be written as
%\[
%\int_{-\overline{\tau}}^{i\infty} \frac{g(z)}{(-i(z+\tau))^{k}} dz,
%\]
%where is   a cusp form of  weight $2-k$.
%\emph{Here we have followed the custom of calling the function $g$ in \eqref{period} the ``shadow" of the associated mock modular form, even though this may in general differ slightly from the definition of the shadow of a mock modular form coming from the $\xi_k$-operator}
%In this paper,  we  will consider functions arising from functions $g(u;\tau)$ and their derivatives with %respect to $z$ specialized to $z=1$.
%This leads to harmonic weak Maass forms of weight $1/2$ (resp. $3/2$); the shadows of the corresponding mock %modular forms are
%unary theta functions
%of weight $3/2$ (resp. $1/2$).}
%\end{remark}

\begin{remark}
\emph{We note that (2)  and (3) of Theorem \ref{explicit} (for $z\not=1$)  could be concluded
from   \cite{Br-On-Rh1}  but for the readers convenience we give a proof here.}
\end{remark}
\begin{proof}
For the proof, we will require the well-known fact  that if $\lambda,\mu \in \Q$ and $\Phi(u;\tau)$ is a Jacobi form of weight $k$ and index $m$, then
$q^{m\lambda^2} \, \Phi(\lambda \tau+ \mu;\tau)$
is a modular form (on some congruence subgroup).
Moreover
$\left. \frac{\partial}{\partial u}\left( \Phi(u;\tau)\right) \right|_{u=0}$
is a modular form of weight $k+1$ and index $m$.

\noindent
(1)
We first consider the case  $(a,b)=(0,0)$.
We have   by (\ref{Watsonid})
$$
N^o(0,0;z;q) = \frac{1}{(q^2;q^2)_{\infty}} \sum_{n \in \mathbb{Z}} \frac{(-1)^nq^{3n^2+3n+1}}{1-zq^{2n+1}}.
$$
 It was shown in
 \cite{BZ}  that
$$
N^o(0,0;z;q)
= z^{-1} \left( R^*(zq;q^2)-1\right),
$$
where $R(z;q)$ is Dyson's 2-variable rank generating function
 and
 $$
 R^*(z;q) := \frac{R(z;q)}{1-z}.
 $$
 Using the identity (see equation (3.1) of \cite{BZ})
 \begin{equation*} \label{Rstar1}
R^*(z;q)=
iz^{-\frac32 }  q^{-\frac18} \mu(3u,-\tau;3 \tau) - i z^{\frac12} q^{-\frac18} \mu(3u,\tau;3 \tau) - iz^{-\frac12} q^{ \frac{1}{24}} \frac{ \eta^3(3 \tau)  }{ \eta(\tau) \vartheta(3u;3 \tau)  }
\end{equation*}
gives that
\begin{multline*}
N^o(0,0;z;q)
=
iz^{-\frac52 }  q^{-\frac74} \mu(3u+3 \tau,-2\tau;6 \tau) - i z^{-\frac12} q^{\frac14} \mu(3u+3 \tau,2\tau;6 \tau)\\
- iz^{-\frac32} q^{ -\frac{5}{12}} \frac{ \eta^3(6 \tau)  }{ \eta(2\tau) \vartheta(3u+3 \tau;6 \tau)  }-z^{-1}.
\end{multline*}
Combining this  with Lemma \ref{muProperties}, Theorem \ref{ZwegersThm}, and Remark \ref{JacobiRemark}  one can show that the
 function
$q^{-\frac13}N^o(0,0;z;q)$
is (up to addition of a constant) the holomorphic part of a   harmonic weak  Maass form
of  weight $\frac12$.  In particular, its   non-holomorphic part  is given by
\[
\frac{1}{2} \left( -q^{-\frac{25}{12}}z^{-\frac{5}{2}}  R(3u+5\tau;6\tau) + q^{-\frac{1}{12}}z^{-\frac{1}{2}} R(3u+\tau;6\tau)\right)
.
\]
Using Proposition \ref{IntProp}, the elliptic transformation properties of $R(u)$ and the properties of $g_{a,b}(\tau)$ given in \cite{Zw1} we obtain that this equals
\begin{align*}
& \frac{1}{2} \left( -2 q^{-\frac{1}{3}}z^{-1}+ q^{-\frac{1}{12}}z^{\frac{1}{2}}R(3u-\tau;6\tau)
 + q^{-\frac{1}{12}}z^{-\frac{1}{2}} R(3u+\tau;6\tau)\right)
\\
& = -q^{-\frac{1}{3}}z^{-1} -\frac{e^{\frac{\pi i}{6} } }{2} \int_{- 6\overline{\tau}}^{i \infty}
\frac{\left(g_{\frac{1}{3},\frac{1}{2}-3u }(w)+g_{\frac{1}{3},\frac{1}{2}+3u }(w)\right)}{\sqrt{ -i(6 \tau+w) }} \, dw
.
\end{align*}
Letting $w\rightarrow 6w$ we see that the shadow of $q^{-\frac13}N^o(0,0;z;q)$ equals
\begin{align*}
 -\frac{\sqrt{6}e^{\frac{\pi i}{6}}}{2}\left(g_{\frac{1}{3},\frac{1}{2}-3u }(6\tau)+g_{\frac{1}{3},\frac{1}{2}+3u }(6\tau)\right)
.
 \end{align*}
 Inserting the definition of $g_{a,b}$ now easily gives the claim.

\noindent
(2)  We next consider the case $(a,b)=\left(0,\frac{1}{q} \right)$.
We have   by  (\ref{Watsonid})
$$
N^o \left( 0,\frac{1}{q};z;q\right)
= \frac{\left(-q^2;q^2 \right)_{\infty}}{\left( q^2;q^2\right)_{\infty}}
\sum_{n \in \Z}
\frac{(-1)^n q^{2n^2+2n+1 } }{1-zq^{ 2n+1} }.
$$
It is not hard to see, proceeding as in the proof of Theorem 3.1 in  \cite{BZ}, that
\begin{equation} \label{case2mu}
N^o \left( 0,\frac{1}{q};z;q\right)
=
- i \frac{\eta^4(4 \tau)}{z \eta^2(2 \tau) \vartheta(2u+2 \tau;4 \tau)}
- i q^{\frac12} \mu(2u + 2 \tau,2 \tau;4 \tau).
\end{equation}
This easily implies the claim similarly as before.
Here, the non-holomorphic part of the completion of  $q^{-\frac12}N^o \left( 0,\frac{1}{q};z;q\right)$ is given by $\frac{1}{2} R(2u;4\tau)$. Using Proposition \ref{IntProp} we obtain
    \[
\frac{1}{2} R(2u;4\tau) = - \frac12 \int_{-4 \overline{\tau}}^{i \infty}
\frac{g_{\frac{1}{2},\frac{1}{2}-2u} (w)}{\sqrt{ -i(4 \tau+w) }} \, dw
.
\]
We let $w\rightarrow 4w$ and see that the shadow of $q^{-\frac12}N^o \left( 0,\frac{1}{q};z;q\right)$ is equal to
$- g_{\frac{1}{2},\frac{1}{2}-2u}(4\tau).$
From this it is not hard to show the claim.

\noindent
(3) For   $(a,b)=(0,-1)$, we  have  from  (\ref{Watsonid})
\begin{equation} \label{case0-1}
N^o \left( 0,-1;z;q\right)
= \frac{\left(q;q^2 \right)_{\infty}}{\left( q^2;q^2\right)_{\infty}}
\sum_{n \in \Z}
\frac{q^{2n^2+3n+1 } }{\left(1-zq^{ 2n+1}\right) \left( 1-q^{2n+1}\right) }.
\end{equation}
We first assume that $z \not=1$.
Using that in this case we have that
$$
\frac{z^{-1}-1}{  \left( 1-zq^{2n+1}\right)\left( 1-q^{2n+1}\right)}
= \frac{z^{-1}}{1-q^{2n+1}}
- \frac{1}{1-zq^{2n+1}}
$$
and
\begin{equation} \label{vanish}
\sum_{n \in \Z}
\frac{q^{2n^2+3n+1 } }{1-q^{ 2n+1}  }=0
\end{equation}
yields
$$
N^o \left( 0,-1;z;q\right)
= -
 \frac{\left(q;q^2 \right)_{\infty}}{\left(z^{-1}-1\right)\left( q^2;q^2\right)_{\infty}}
\sum_{n \in \Z}
\frac{q^{2n^2+3n+1 } }{1-zq^{ 2n+1}}.
$$
As in the proof of Theorem 3.3 of \cite{BZ}, we see that
$$
N^o \left( 0,-1;z;q\right)
= -\frac{1}{1-z}
\left(
\mu\left(2u+2 \tau,\tau+ \frac12;4 \tau\right)
-  z \mu\left(2u+2 \tau,3\tau+ \frac32;4 \tau\right)
\right).
$$
This easily yields the claim for $z \not=1$.
In this case, the non-holomorphic part of the completion of  $q^{-\frac18} N^o \left( 0,-1;z;q\right)$ is
given by
\[
-
 \frac{i}{2}\frac{q^{-\frac{1}{8}}}{1-z} \left(R\left(2u+\tau-\frac{1}{2};4\tau\right)
+zR\left(2u-\tau-\frac{1}{2};4\tau\right)\right)
.
\]
Using Proposition \ref{IntProp} and properties of $g_{a+\frac12,b+\frac12}$ we find that this equals
\begin{align*}
 \frac{i}{2} \frac{z^{\frac12}}{1-z}\int_{- 4\overline{\tau}}^{i \infty}
\frac{ g_{\frac{1}{4},2u}(w) +  g_{\frac{1}{4},-2u}(w)}{\sqrt{ -i(4 \tau+w) }} \, dw
 .
 \end{align*}
Again we let $w\rightarrow 4w$ and we see that the shadow of  $q^{-\frac18} N^o \left( 0,-1;z;q\right)$ is given by
\[
  \frac{iz^{\frac12}}{1-z}\left(g_{\frac{1}{4},2u}(4\tau) -  g_{\frac{1}{4},-2u}(4\tau)\right)
.
\]
Again rewriting gives the claim.

%333333333333333333333333333333333 NON HOL

We next consider the case $z=1$.
We have  from (\ref{case0-1})
$$
N^o \left( 0,-1;1;q\right)
=
 \frac{\left(q;q^2 \right)_{\infty}}{\left( q^2;q^2\right)_{\infty}}
\sum_{n \in \Z}
\frac{q^{2n^2+3n+1 } }{\left(1-q^{ 2n+1}\right)^2}.
$$
Following the calculations for $z \not=1$, we see that
\begin{eqnarray*}
N^o \left( 0,-1;1;q\right)
&= &
\left.
\frac{d}{d z}
 \left(
\mu\left(2u+2 \tau,\tau+ \frac12;4 \tau\right)
- z\mu\left(2u+2 \tau,3\tau+ \frac32;4 \tau\right)
\right)\right|_{z=1} \\
&=&\left. q^{\frac18} \frac{d}{d z} \left( \Phi_1(u;\tau)\right)\right|_{z=1}
+ \frac12 q^{-\frac18}\Phi_1(0;\tau),
\end{eqnarray*}
where
$$
 \Phi_1(u;\tau)
 :=
z^{-\frac12}q^{-\frac18}
  \left(
\mu\left(2u+2 \tau,\tau+ \frac12;4 \tau\right)
- z\mu\left(2u+2 \tau,3\tau+ \frac32;4 \tau\right)
\right).
$$
Using (\ref{vanish}), we obtain that
\begin{eqnarray*}
q^{-\frac18}
N^o \left( 0,-1;1;q\right)
=   \left.
\frac{d}{d z} \left(\Phi_1(u;\tau)\right)\right|_{z=1}.
\end{eqnarray*}
Using the above, one can show that $q^{-\frac18}
N^o \left( 0,-1;1;q\right) $ can be completed to  a harmonic weak  Maass form by adding the term
%($u=u_1+iu_2$)
\begin{multline} \label{diff1}
 \left.
\frac{i}{2} \frac{d}{d z}
\left(
q^{-\frac18}z^{-\frac12} R\left( 2u+ \tau-\frac12;4 \tau\right)
+q^{-\frac18}z^{\frac12} R\left( 2u- \tau-\frac12;4 \tau\right)
\right)\right|_{z=1} \\
=
\left.
i \frac{d}{d z}
\left(
q^{-\frac18}z^{-\frac12} R\left( 2u+ \tau-\frac12;4 \tau\right)
\right)
\right|_{z=1}
\\
=
 - \left.
\frac{d}{d z} \left(
\sum_{n \in \Z}
\left( \text{sgn}\left(n-\frac12 \right) -E \left(\left( n-\frac14+\frac{\text{Im}(u)}{2y}\right) \sqrt{8y} \right) \right)
q^{-2 \left(n-\frac14 \right)^2} z^{-2 \left(n-\frac14 \right) } \right)
\right|_{z=1}
.
\end{multline}
Using the   identities (\ref{Efunction}) and
\begin{displaymath}
\begin{split}
\beta(x)&=\frac{1}{\pi}x^{-\frac{1}{2}}e^{-\pi
x}-\frac{1}{2\sqrt{\pi}}\cdot \Gamma\left(-\frac12;\pi x\right),\\
E'(x)&=2e^{-\pi x^2},
\end{split}
\end{displaymath}
where $\Gamma(\alpha;x):=\int_{x}^{\infty}t^{\alpha-1}e^{-t} dt$ is the usual incomplete gamma-function, we compute that
(\ref{diff1}) equals
\begin{equation} \label{nonhol3}
-\frac{1}{\sqrt{\pi}} \sum_{n \in \Z} \left|n-\frac14 \right|
\Gamma\left( -\frac12; 8 \pi y \left( n-\frac14\right)^2\right)q^{-2 \left(n-\frac14 \right)^2}
\end{equation}
which does not contribute to the holomorphic part.
To rewrite (\ref{nonhol3})   as a theta integral, we use the easily verified identity ($\alpha>0$)
\begin{equation} \label{nonholintegral}
e^{- \alpha i \tau} \Gamma\left( -\frac12;2 \alpha y\right)
= -\frac{i}{\sqrt{\alpha}} \int_{-\overline{\tau}}^{i \infty}
\frac{e^{\alpha i t}}{\left(-i \left(t + \tau \right) \right)^{\frac32}}dt.
\end{equation}
This yields that (\ref{nonhol3}) may be written as
\[
 \frac{i}{2\pi}\int_{-\overline{\tau}}^{i\infty} \frac{\sum_{n \in \Z}
 e^{4\pi i \left(n-\frac14\right)^2 t}}{(-i(t+\tau))^{\frac32}} dt.
\]
This gives that the  shadow of $q^{-\frac18}
N^o \left( 0,-1;1;q\right) $ equals
\[
\frac{i}{2\pi}\sum_{n \in \Z}
 q^{\frac{\left(4n-1\right)^2}{8}}=
\frac{i}{2\pi} \frac{\eta^2(2\tau)}{\eta(\tau)}
.
\]

 \noindent
(4) Turing to the   case  $(a,b)=(1,-1)$, we have from  (\ref{Watsonid})
$$
N^o(1,-1;z;q) = \frac{(q^2;q^4)_{\infty}}{(q^4;q^4)_{\infty}}
 \sum_{n \in \mathbb{Z}} \frac{q^{n^2+3n+1}}{\left(1-zq^{2n+1}\right)\left( 1-q^{4n+2}\right)}.
$$
We first assume that $z \neq \pm 1$.
Then we have
$$
 \frac{1}{\left(1-zq^{2n+1}\right)\left( 1-q^{4n+2}\right)}
 = \frac{-z^2}{\left(1-z^2 \right)\left( 1-zq^{2n+1}\right)}
 +  \frac{1}{2\left(1-z \right)\left( 1-q^{2n+1}\right)}
 +  \frac{1}{2\left(1+z \right)\left( 1+q^{2n+1}\right)} .
$$
Since
\begin{eqnarray} \label{thetasum}
 \sum_{n \in \mathbb{Z}} \frac{q^{n^2+3n+1}}{ 1-q^{2n+1}}
& =& - \frac12 \sum_{n \in \Z}q^{n^2+n}
 = -  \frac{(q^4;q^4)_{\infty}^2}{(q^2;q^2)_{\infty}}, \\ \nonumber
  \sum_{n \in \mathbb{Z}} \frac{q^{n^2+3n+1}}{ 1+q^{2n+1}}
 &=&  \frac{(q^4;q^4)_{\infty}^2}{(q^2;q^2)_{\infty}},
\end{eqnarray}
we obtain
$$
  N^o(1,-1;z;q) =
 -\frac{z}{1-z^2}
 -
  \frac{(q^2;q^4)_{\infty}z^2}{(1-z^2)(q^4;q^4)_{\infty}} \sum_{n \in \mathbb{Z}} \frac{q^{n^2+3n+1}}{1-zq^{2n+1}}.
$$
A direct computation shows  that
\begin{equation} \label{equation1-1}
N^o(1,-1;z;q)
=-  \frac{z}{1-z^2}
+2 z^{\frac32} q^{-\frac14}
\frac{1}{1-z^2}
\mu\left(u-\tau, \frac12;2 \tau \right).
\end{equation}
From this it is not hard to conclude  that for $z\not= \pm1$ the function $N^o(1,-1;z;q)$  can be completed to a harmonic weak Maass form of weight $\frac12$, by adding   the function
\begin{equation} \label{adding}
-
\frac{z}{1-z^2}
+
z^{\frac12} q^{-\frac14}
\frac{i}{1-z^2}
R\left(u+\tau-\frac12; 2 \tau\right).
\end{equation}
Using  Proposition \ref{IntProp} we find
\begin{align*}
z^{\frac12} q^{-\frac14}
\frac{i}{1-z^2}  R\left(u+\tau-\frac{1}{2};2\tau\right)
= \frac{z}{1-z^2} - i \frac{z}{1-z^2} \int_{- 2\overline{\tau}}^{i \infty}
\frac{g_{0,u} (w)}{\sqrt{ -i(2 \tau+w) }} \, dw
.
\end{align*}
Thus, the holomorphic contribution to (\ref{adding}) is $0$ and the shadow of $N^o(1,-1;z;q)$
equals
$$
\frac{\sqrt{2}i z}{1-z^2} g_{0,u}(2\tau)
.
$$

We next turn to the case $z=1$ (the case $z=-1$ is obtained by replacing $q$ by $-q$ and then multiplying by $-1$).
We have by taking the limit of (\ref{equation1-1})
 \begin{equation*}
N^o(1,-1;1;q)
=
\frac12
\left.
- \frac{d}{d z} \left(  z^{\frac32} q^{-\frac14}
\mu\left(u-\tau, \frac12;2 \tau \right)\right)\right|_{z=1}
=
-
\left.
 \frac{d}{d z} \left(   \Phi_2(u;\tau)\right)\right|_{z=1}
,
\end{equation*}
where
$$
\Phi_2(u;\tau):=  z^{\frac12} q^{-\frac14}
\mu\left(u-\tau, \frac12;2 \tau \right).
$$
Here we used that $\Phi_2(0;\tau)=\frac12$.
From the case $z \not=1$ we may conclude that this function can be completed to an harmonic weak Maass form by adding
\begin{multline} \label{correction}
- \frac{i}{2} \left.
\frac{d}{d z}
\left(
z^{\frac12} q^{-\frac14}
R\left(u-\tau-\frac12; 2 \tau\right)
\right) \right|_{z=1} \\
=
 \frac12 \left. \frac{d}{d z} \left(
 \sum_{n \in \Z} \left(\text{sgn}\left( n+\frac12\right)
 -E \left( \left( n+\frac{\text{Im}}{2y}\right) 2 \sqrt{y} \right)
 q^{-n^2}z^{-n}
  \right) \right)
 \right|_{z=1}\\
 =
  \frac{1}{ 4\sqrt{\pi}}
  \sum_{n \in \Z}
  \Gamma\left( -\frac12;4 \pi n^2y\right) |n| q^{-n^2}.
\end{multline}
We now proceed as in case (3). Using (\ref{nonholintegral}), we  rewrite the correction term as a period integral
\[
 -\frac{i}{ 4\sqrt{2}\pi} \int_{-\overline{\tau}}^{i\infty}  \frac{\sum_{n \in \Z} e^{2\pi i n^2 t}}{(-i(t+\tau))^{\frac32}} dt
.
\]
From this we can directly conclude that the shadow of $N^o(1,-1;1;q)$ is given by
\[
-\frac{i}{ 4\sqrt{2}\pi} \Theta(\tau).
\]

\noindent
(5)
Finally we treat the case  $(a,b)=(1,1/q)$.
We obtain  that
 \begin{equation*} \label{1-1/q}
 N^o(1,1/q;z;q) =
\frac{(-q)_{\infty}}{(q)_{\infty}} \sum_{n \in \mathbb{Z}} \frac{(-1)^nq^{n^2+2n+1}}{\left(1-zq^{2n+1}\right)\left(1+q^{2n+1} \right)} .
\end{equation*}
We first assume that   $z \neq -1$.
Using the identity
$$
 \frac{1+z^{-1}}{\left(1-zq^{2n+1}\right)\left( 1+q^{2n+1}\right)}
  =   \frac{z^{-1}}{\left( 1+q^{2n+1}\right)}
 +  \frac{1}{ 1-zq^{2n+1}}
$$
and the fact that
\begin{equation} \label{sumvanish}
 \sum_{n \in \mathbb{Z}} \frac{(-1)^nq^{n^2+2n+1}}{1+q^{2n+1} }
 = 0
 \end{equation}
 gives that
\begin{equation*} \label{N11q}
N^o(1,1/q;z;q) =
 \frac{(-q)_{\infty}}{(1+1/z)(q)_{\infty}} \sum_{n \in \mathbb{Z}} \frac{(-1)^nq^{n^2+2n+1}}{1-zq^{2n+1}} .
\end{equation*}
It is not hard to see that
this  can be rewritten as
$$
N^o(1,1/q;z;q) =
-\frac{iz^{\frac12}}{1+z}  q^{\frac14}
\mu(u + \tau,\tau;2 \tau).
$$
It is now not hard to show that $q^{-\frac14}N^o(1,1/q;z;q) $ is the holomorphic part of a harmonic weak  Maass form.
Here, the associated non-holomorphic part is $\frac{1}{2(1+z)} z^{\frac{1}{2}} R(u;2\tau)$. By Proposition \ref{IntProp} we find
\[
\frac{z^{\frac{1}{2}}}{2(1+z)} R(u;2\tau) = - \frac{z^{\frac{1}{2}}}{2(1+z)}   \int_{- 2\overline{\tau}}^{i \infty}
\frac{g_{\frac12,\frac12-u} (w)}{\sqrt{ -i(2 \tau+w) }} \, dw
,
\]
which obviously does not have a holomorphic contribution.
Hence, the shadow of $q^{-\frac14}N^o(1,1/q;z;q) $ is given by the unary theta function
\[
-\frac{1}{ \sqrt{2}}\frac{z^{\frac12} }{1+z} g_{\frac{1}{2}, \frac{1}{2}-u}(2\tau)
.
\]

We next deal with  the case $z=-1$
From the case $z \not=-1$, we may conclude that
\begin{equation*}
N^o(1,1/q;-1;q) =
\frac{d}{d z}
\left(
\left.
-iz^{\frac12}q^{\frac14}
\mu(u + \tau,\tau;2 \tau) \right)
\right|_{z=-1}.
\end{equation*}
Thus, using (\ref{sumvanish}), we obtain
$$
q^{-\frac14} N^o(1,1/q;-1;q) =
 -i \left.
\frac{d}{d z}  \left(\Phi_3(u;\tau)\right)\right|_{z=1}
- \frac{i}{2} \mu\left( \frac12+ \tau,\tau;2 \tau\right)
=  -i \left.
\frac{d}{d z} \left( \Phi_3(u;\tau) \right)\right|_{z=1},
$$
where
$$
\Phi_3(u;\tau):= \mu \left( u+\frac12+\tau, \tau; 2 \tau \right).
$$
Thus $q^{-\frac14} N^o(1,1/q;-1;q)$ may
 be completed to a harmonic weak Maass form  by adding the term
\begin{eqnarray*}
\left.
\frac{1}{2} \frac{d}{d z} \left(
R\left( u+\frac12;2 \tau\right)\right) \right|_{z=1}
&=  &
- \left.
\frac{i}{2} \frac{d}{d z} \left(
\sum_{\nu \in \frac12+\Z}
\left(
\text{sgn}(\nu)- E \left(\left(\nu + \frac{\text{Im}(u)}{2y} \right) 2 \sqrt{y} \right)
\right) q^{-\nu^2}z^{-\nu} \right)\right|_{z=1}\\
&=& \frac{-i}{4 \sqrt{ \pi}}
\sum_{\nu \in \frac12 + \Z} |\nu| \Gamma\left( -\frac12;4 \pi \nu^2y\right) q^{-\nu^2}
\end{eqnarray*}
which does not have a holomorphic contribution.
Using (\ref{nonholintegral}), we  rewrite the non-holomorphic part in terms of a period integral
\[
-  \frac{1}{4 \sqrt{ 2}\pi }
\int_{-\overline{\tau}}^{i\infty}\frac{ \sum_{n \in \frac12 + \Z} e^{2\pi i n^2t}} {(-i(t+\tau))^{\frac32}} dt
\]
and we directly see that the shadow of $q^{-\frac14} N^o(1,1/q;-1;q)$ is given by
\[
- \frac{1}{4 \sqrt{ 2}\pi }
 \sum_{n \in \Z} q^{\frac{\left(2n+1\right)^2}{4}}
 =
- \frac{1}{2 \sqrt{ 2}\pi }\frac{\eta^2(4\tau)}{\eta(2\tau)}
.
\]

\end{proof}

\section{Relation to class numbers}
In this section we prove Theorem \ref{intro1.5}.  Equations \eqref{classeq1} and \eqref{classeq2} follow directly from \eqref{Watsonid} and identities of Andrews, Humbert, and Kronecker.  Equation \eqref{classeq3} we could not trace directly to the literature and so we argue using \eqref{Watsonid}, an identity of Watson, and Lemma \ref{muProperties}.
\begin{proof}[Proof of Theorem \ref{intro1.5}]
For \eqref{classeq1}, beginning with \eqref{Watsonid} we have
\begin{eqnarray*}
N^o(1,-1;1;q) &=& \frac{(q^2;q^4)_{\infty}}{2(q^4;q^4)_{\infty}} \sum_{n \in \mathbb{Z}} \frac{q^{n^2+3n+1}}{(1-q^{2n+1})^2} \\
&=& \frac{(q^2;q^4)_{\infty}}{(q^4;q^4)_{\infty}} \sum_{n \geq 0} \frac{q^{n^2+3n+1}}{(1-q^{2n+1})^2} \\
&=& \sum_{n \geq 1} 2F(n)q^n,
\end{eqnarray*}
the last equality being equation (XI) in \cite{Kr1}.

For \eqref{classeq2} we begin with \eqref{Watsonid}, obtaining
\begin{eqnarray*}
N^o(0,-1;1;q) &=& \frac{(q;q^2)_{\infty}}{(q^2;q^2)_{\infty}} \sum_{n \in \mathbb{Z}}\frac{q^{2n^2+3n+1}}{(1-q^{2n+1})^2} \\
&=& \frac{(-q)_{\infty}^2}{(q)_{\infty}^2} \sum_{n \geq 1} \frac{(-1)^{n+1}n^2q^{n(n+1)/2}}{(1+q^n)} \\
&=& \sum_{n \geq 1} F(8n-1)q^n,
\end{eqnarray*}
the penultimate equality being an identity from Ramanujan's lost notebook proven by Andrews \cite[Eq. (1.1)]{An.7} and the final equality coming from Humbert \cite[p. 368]{Hu1} (or see \cite[p. 51]{Wa2}).  To finish we note that $H(8n-1) = F(8n-1)$.

For \eqref{classeq3}, we start by noting that from \eqref{Watsonid} we have
\begin{equation} \label{class1}
N^o(1,1/q;-1;q) = \frac{(-q)_{\infty}}{(q)_{\infty}} \sum_{n \in \mathbb{Z}} \frac{(-1)^nq^{n^2+2n+1}}{(1+q^{2n+1})^2}.
\end{equation}
Next, the first identity in part ($2$) of Lemma \ref{muProperties} is equivalent to the identity
$$
\frac{1}{(z,q/z)_{\infty}}\sum_{n \in \mathbb{Z}} \frac{(-1)^nz^nq^{n(n+1)/2}}{(1-aq^n)} = \frac{1}{(a,q/a)_{\infty}}\sum_{n \in \mathbb{Z}} \frac{(-1)^na^nq^{n(n+1)/2}}{(1-zq^n)}.
$$
In this identity let $q \to q^2$, $a \to 1/q$, and $z\to -zq$ and then apply $\frac{d}{dz}|_{z=1}$ to both sides to obtain
\begin{equation} \label{class2}
\frac{(-q)_{\infty}}{(q)_{\infty}} \sum_{n \in \mathbb{Z}} \frac{(-1)^nq^{n^2+2n+1}}{(1+q^{2n+1})^2} = \frac{1}{(-q,-q,q^2;q^2)_{\infty}} \sum_{n \in \mathbb{Z}} \frac{nq^{n^2+2n-1}}{(1-q^{2n-1})}.
\end{equation}
Finally an identity of Watson \cite[Eq. (3.04), corrected]{Wa2} implies that
\begin{eqnarray*}
\sum_{n \geq 1} F(4n-1)(-q)^n &=& \frac{-1}{(-q,-q,q^2;q^2)_{\infty}} \sum_{n \in \mathbb{Z}} \frac{(n-1/2)q^{n^2}}{(1-q^{2n-1})} \\
&=& \frac{-1}{(-q,-q,q^2;q^2)_{\infty}} \sum_{n \in \mathbb{Z}} \frac{nq^{n^2}}{(1-q^{2n-1})} \\
&=& \frac{-1}{(-q,-q,q^2;q^2)_{\infty}} \sum_{n \in \mathbb{Z}} \frac{nq^{n^2+2n-1}}{(1-q^{2n-1})}.
\end{eqnarray*}
Now apply \eqref{class1} and \eqref{class2} to obtain the first part of \eqref{classeq3}.  To finish we use the fact that $F(8n+3) = 3H(8n+3)$ and $F(8n+7) = H(8n+7)$.
\end{proof}

%Moreover,
%\begin{equation} \label{Kron1}
%N^o(1,-1;1;q) = \sum_{n \geq 0} \frac{(-1)^nq^{2n+1}(q^2;q^4)_n}{(q;q^2)_{n+1}^2}
%\end{equation}
%is the generating function for Kronecker's class numbers $F(n)$ \cite{Kr1} and
%\begin{equation} \label{Kron2}
%N^o(0,-1;1;q) = \sum_{n \geq 0} \frac{(-1)^nq^{n^2+2n+1}(q;q^2)_n}{(q;q^2)_{n+1}^2}
%\end{equation}
%is the generating function for $F(8n-1)$ (by \cite[p. 368]{Hu1} and \cite[Eq. (1.1)]{An.7}).

\section{Quasimock modular forms}
%In analogy with quasimodular forms, a quasimock modular form is a linear combination of $ \tau$-derivatives of mock modular forms (i.e. holomorphic parts of harmonic weak  Maass forms).
To prove Theorem \ref{intro2}, we will appeal to certain partial differential equations arising from the application of the heat operator to non-holomorphic Jacobi forms \cite{BZ}.
\\
(1) We begin with the case $(a,b)=(0,0)$.  In equation (4.2) of \cite{BZ} it is shown that
$$
\left(6\pi i \frac{\partial}{\partial \tau} + \frac{\partial^2}{\partial u^2}\right)
\left(q^{-\frac{1}{3}}N^o(0,0;z;q)\right)
= - \frac{8q^{-\frac{3}{4}}z^{-\frac{3}{2}}\pi^2i\eta^8(2\tau)}{\vartheta^3(u+\tau;2\tau)}.
$$
We operate on both sides by $ \frac{\partial^{2\ell}}{\partial u^{2\ell}} | _{u=0}$  to obtain
$$
\frac{\partial^{2(\ell+1)}}{\partial u^{2(\ell+1)}} \left(q^{-\frac{1}{3}}N^o(0,0;z;q)\right)  \bigg | _{u=0}=
-6\pi i \frac{\partial}{\partial \tau}\left(\frac{\partial^{2\ell}}{\partial u^{2\ell}}  \left(q^{-\frac{1}{3}}N^o(0,0;z;q)\right)\right) \bigg | _{u=0}+  g_{\ell}(\tau),
$$
where as in the proof of Colloary \ref{introcor2} in Section 3 we have that $g_{\ell}$ is a quasimodular form.  Now by induction (the fact that $q^{-\frac{1}{3}}N^o(0,0;0;q)$ is a mock theta function-as was shown in Theorem 4.6 (1)- settling the case $\ell = 0$), the claim follows.  The rest of the cases are similar and so we will only exhibit the required PDE's.

\noindent
(2) We next consider the case $(a,b)=(0,1/q)$.  Here we make a change of variables in formula (3.6) in \cite{BZ} and can compute, using
(\ref{case2mu}),
\begin{align*}
&\left(4\pi i \frac{\partial}{\partial \tau}+ \frac{\partial^2}{\partial u^2}\right) \left( q^{-\frac{1}{2}} N^{o}\left(0,1/q;z;q\right)\right)
\\
&\quad = \left. - i q^{-\frac{1}{2}}z^{-1} \left(8\pi i \frac{\partial}{\partial \tau}+ \frac{\partial^2}{\partial u^2}\right)\left(\frac{\eta^4(2\tau)}{\eta^2(\tau)\vartheta(2u;2\tau)}+q^{-\frac{1}{4}}z\mu(2u,\tau;2\tau)\right)\right|_{\tau\rightarrow 2\tau, u\rightarrow u+\tau}
\\
&\quad=  4 i  \pi^2  q^{-\frac{1}{2}}z^{-1} \frac{\eta^8(2\tau)}{\eta(4\tau)} \frac{\vartheta\left(u+\tau+1/2;2\tau\right)}{ \vartheta^3(u+\tau;2\tau)}
\text{.}
\end{align*}
Then the claim follows as above.
%Again the claim can be concluded from the proof of Theorem 1.4 of \cite{BZ}.
%\\
%\\
%In the following three cases we argue by analytic continuation to extend the result to all roots of unity $z$.

\noindent
(3) Turning to  the case $(a,b)=(0,-1)$, a  change of variables in formula (3.10) of \cite{BZ} yields the following PDE
\begin{align*}
&\left(4\pi i\frac{\partial}{\partial\tau} +\frac{\partial^2}{\partial u^2}\right)
\left(
e^{\frac{3\pi i}{8}} (1-z)z^{-\frac{1}{2}}q^{-\frac{1}{8}}N^{o}(0,-1;z;q)
\right)
\\
&\quad=\left. z^{-1}q^{-\frac{1}{2}}\left(4\pi i \frac{\partial}{\partial \tau} +\frac{\partial^2}{\partial u^2}\right) \left( z^{\frac{1}{2}} q^{-\frac{1}{8}} \mu(2u,\tau;4\tau)+z^{\frac{3}{2}} q^{-\frac{9}{8}} \mu(2u,3\tau;4\tau)\right)\right|_{u\rightarrow u+\tau, \tau\rightarrow\tau+1/2}
\\
&\quad=-8\pi^2 e^{\frac{3 \pi i}{8}} z^{-\frac{3}{2}}q^{-\frac{3}{4}} \frac{\eta^8(2\tau)}{\eta(\tau)} \frac{\vartheta(u;2\tau)}{  \vartheta^3(u+\tau;2\tau)}
\text{.}
\end{align*}
As in the proof of Theorem 4.2 of \cite{Br-Lo-Os1} we can conclude a PDE for $q^{-\frac{1}{8}}N^{o}(0,-1;z;q)$.
%The result then follows as before.

\noindent
(4) For  the case $(a,b)=(1,-1)$, we  use Theorem 1.2 of \cite{BZ} with $\alpha=0$ and $\beta=\frac{1}{2}$.
Making a change of variables   and computing the resulting functions $\vartheta_0, \vartheta_1, a_0, a_1$ occurring there  yields
\begin{align*}
 &\left(2\pi i \frac{\partial}{\partial \tau} + \frac{\partial^2}{\partial u^2} \right)
 \left(\frac{1}{2}(1-z^2)z^{-1} \left(N^{o}(1,-1;z;q)+\frac{z}{1-z^2}\right)
 \right)\\
 &
 = \left. z^{\frac{1}{2}}q^{-\frac{1}{4}} \left(4\pi i \frac{\partial}{\partial \tau}+  \frac{\partial^2}{\partial u^2}\right) \left( \mu\left(u,1/2;\tau\right)\right) \right|_{u\rightarrow u-\tau, \tau \rightarrow 2\tau}
 \\ &
=-16\pi^2z^{\frac{3}{2}}q^{-\frac{3}{4}} \frac{ \eta^6(2\tau) \eta^3(4\tau) }{\vartheta^3(u-\tau;2\tau)\vartheta^2(1/2;2\tau)} \vartheta(2u;4\tau)\text{.}
\end{align*}
Then we argue as in the preceding case.

\noindent
(5) Finally, we consider the case $(a,b)=(1,1/q)$.  By  making a change of variables in formula (3.8) of \cite{BZ} we have
\begin{align*}
&\left(2\pi i \frac{\partial}{\partial \tau} +  \frac{\partial^2}{\partial u^2} \right)
\left( i(1+z)q^{-\frac{1}{4}} z^{-\frac{1}{2}} N^{o}(1,1/q;z;q)
\right)\\
&
=\left.z^{-\frac{1}{2}}q^{-\frac{1}{4}} \left(2\pi i \frac{\partial}{\partial \tau}+  \frac{\partial^2}{\partial u^2} \right)\left(z^{\frac{1}{2}} q^{-\frac{1}{4}} \mu(u,\tau;2\tau)\right)\right|_{u\rightarrow u+\tau}
\\
&= -4\pi^2z^{-\frac{3}{2}}q^{-\frac{3}{4}} \frac{\eta^8(2\tau)}{\eta(\tau)}\frac{\vartheta(u+1/2;\tau)}{\vartheta^3(u+\tau;2\tau)}
\text{.}
\end{align*}
The claim concludes as before.

\section{conclusion} \label{ConclusionSection}
There are many possible number-theoretic applications of the automorphic structure of \newline $N^o(a,b;z;q)$ and $\mathcal{N}^o_{2k}(a,b;q)$.  To give a few examples, the quasimock modularity of $\mathcal{N}^o(a,b;q)$ and the associated PDE's may be used to deduce asymptotic expansions, congruences, and rank moment identities as in \cite{At-Ga1,Br.5,Br1,Br-Ga-Ma1,Br-Lo-Os1,BZ}; the connection to class numbers in Theorem \ref{intro1.5} yields congruences, asymptotics, exact formulas and identities as in \cite{Br-Lo2}; congruences, asymptotics and identities for the mock theta functions in Theorem \ref{intro1} can be deduced as \cite{Br.7,Br1, BK, Br-Lo1,Br-Lo-Os1,Br-On-Rh1,Br-On.5,Br-On1,Ga1,Wal1}; and rank differences and congruences for $N^o(a,1/a;1;q)$ may be studied as in \cite{Br-Lo1.5}.
Also, given the families of mock theta functions studied here and in \cite{Br-Lo1,Br-On1,Br-On-Rh1}, it should be possible to produce many $q$-series identities by canceling the non-holomorphic parts of the corresponding harmonic weak  Maass forms and computing the resulting weakly holomorphic modular form.

There are also interesting combinatorial questions arising from our work.  To give one example, is it possible to make a careful study of the Durfee symbols counted by $N^o(0;-1;1;q)$ and prove combinatorially that $H(8n-1) = 1$ if and only if $n=1$?

\end{document}